\numberwithin{equation}{section}
\theoremstyle{plain}%
\newtheorem{theorem}{Theorem}
\numberwithin{theorem}{section}
\newtheorem{example}[theorem]{Example}
\newtheorem{lemma}[theorem]{Lemma}
\newtheorem{definition}[theorem]{Definition}
\newtheorem{remark}[theorem]{Remark}
\newtheorem{conjecture}[theorem]{Conjecture}
\begin{document}

\title{\bf Orthogonal Decomposition of Symmetric Tensors}
\author{Elina Robeva}
\affil{University of California, Berkeley}
\date{}
\maketitle

\begin{abstract}
A real symmetric tensor is orthogonally decomposable (or odeco) if it can be written as a linear combination of symmetric powers of $n$ vectors which form an orthonormal basis of $\mathbb R^n$. Motivated by the spectral theorem for real symmetric matrices, we study the properties of odeco tensors. We give a formula for all of the eigenvectors of an odeco tensor. Moreover, we formulate a set of polynomial equations that vanish on the odeco variety and we conjecture that these polynomials generate its prime ideal. We prove this conjecture in some cases and give strong evidence for its overall correctness.
\end{abstract}

\section{Introduction}
The spectral theorem states that every $n\times n$ real symmetric matrix $M$ possesses $n$ real eigenvectors $v_1,\dots ,v_n$ which form an orthonormal basis of $\mathbb R^n$. Moreover, one can express $M$ as $M = \sum_{i=1}^n \lambda_1 v_i v_i^T$, where $\lambda_1,\dots ,\lambda_n\in\mathbb R$ are the corresponding eigenvalues.
In this paper we investigate when such a decomposition is possible for real symmetric tensors. We address the following two questions.

\bigskip

\noindent \textbf{Question 1.}\textit{ Which real symmetric tensors $T$ can be decomposed as $T = \lambda_1 v_1^{\otimes d} + \cdots + \lambda_n v_n^{\otimes d}$, form some orthonormal basis $ v_1,\dots ,v_n$ of $\mathbb R^n$ and some $\lambda_1,\dots,\lambda_n\in\mathbb R$? More precisely, can we find equations in the entries of $T$ that cut out the set of tensors for which such a decomposition exists?}

\bigskip
\noindent \textbf{Question 2.} \textit{Given that a tensor $T$ can be decomposed as  $T= \lambda_1v_1^{\otimes d} + \cdots + \lambda_n v_n^{\otimes d}$, where $v_1,\dots ,v_n\in\mathbb R^n$ are orthonormal, can we express the eigenvectors of $T$ (to be defined) in terms of $v_1,\dots ,v_n$?}
\bigskip

Let $S^d\left(\mathbb R^n\right)$ denote the space of $n\times n\times \cdots \times n$ ($d$ times) symmetric tensors, i.e. tensors whose entries are real numbers $T_{i_1\dots i_d}$ invariant under permuting the indices: $T_{i_1\dots i_d} = T_{i_{\sigma\left(1\right)}\dots i_{\sigma\left(d\right)}}$ for all permutations $\sigma$ of the set $\{1,2,\dots ,d\}$. For example, when $d=2$, the space $S^2\left(\mathbb R^n\right)$ consists of all $n\times n$ real symmetric matrices.
 We study the elements $T\in S^d\left(\mathbb R^n\right)$ which can be written as $T = \lambda_1 v_1^{\otimes d} + \cdots + \lambda_n v_n^{\otimes d}$, where $v_1,\dots ,v_n\in\mathbb R^n$ form an orthonormal basis of $\mathbb R^n$ and $\lambda_1,\dots,\lambda_n\in\mathbb R$. We call such tensors $T$ {\em orthogonally decomposable} or, for short, {\em odeco}. 
 
The notion of eigenvectors of matrices was extended to symmetric tensors by Lim~\cite{Lim} and by Qi \cite{Qi} independently in 2005. A vector $w\in\mathbb C^n$ is an {\em eigenvector} of $T\in S^d\left(\mathbb C^n\right)$ if there exists $\lambda\in\mathbb C$, the corresponding {\em eigenvalue}, such that
$$Tw^{d-1} := \Big[\sum_{i_2, \dots , i_d = 1}^nT_{i, i_2, \dots , i_d}w_{i_2}\dots w_{i_d}\Big]_i = \lambda w.$$
Two {\em eigenpairs} $\left(w, \lambda\right)$ and $\left(w', \lambda'\right)$ are equivalent if there exists $t\neq 0$ such that $w = t w'$ and $\lambda = t^{d-2}\lambda'$. 
When $d=2$, these definitions agree with the usual definitions of eigenvectors, eigenvalues, and equivalence of eigenpairs for matrices.

The spectral theorem answers both Question 1 and Question 2 in the case $d=2$: every symmetric matrix $M\in S^2\left(\mathbb R^n\right)$ can be written as $M = \sum_{i=1}^n \lambda_i v_i v_i^T =\sum_{i=1}^n \lambda_i v_i^{\otimes 2}$, where $v_1,\dots ,v_n$ are orthonormal. Moreover, if $M$ is generic (in the sense that its eigenvalues are distinct), then $v_1,\dots ,v_n$ are {\em all} of the eigenvectors of $M$ up to scaling.
\bigskip

In Section \ref{sec:eigenvectors} we give an explicit algebraic formula of all of the eigenvectors of an odeco tensor $T = \lambda_1 v_1^{\otimes d} + \cdots + \lambda_n v_n^{\otimes d}$ in terms of $v_1,\dots ,v_n$, answering Question 2 above. It easily follows from the definition of eigenvectors that $v_1,\dots ,v_n$ are eigenvectors of $T$. These are not all of the eigenvectors of $T$, but it turns out that one can explicitly express the rest of them in terms of $v_1, \dots , v_n$.

For general $d$, not all tensors $T\in S^d\left(\mathbb R^n\right)$ are odeco. In Section \ref{sec:odeco}, we address Question~1. We study the set of all odeco tensors and find equations that vanish on this set.  In Conjecture~\ref{conj:equations} we claim that these define the prime ideal of the odeco variety, which is the Zariski closure of the set of odeco tensors inside $S^d(\mathbb C^n)$. In Theorem \ref{prop:n=2} we prove Conjecture \ref{conj:equations} for the special case $n=2$. In Section \ref{sec:evidence} we conclude the paper by giving evidence for the correctness of this conjecture.

\bigskip
In the remainder of this section we review symmetric tensor decomposition as well as the equivalent characterization of symmetric tensors as homogeneous polynomials. We conclude the section by describing an algorithm, called the tensor power method, which finds the orthogonal decomposition of an odeco tensor. 

\subsection{Symmetric tensor decomposition}

Orthogonal decomposition is a special type of {\em symmetric tensor decomposition} which has been of much interest in the recent years; references include \cite{BCMT,LanO,LO,R}, and many others. Given a tensor $T\in S^d\left(\mathbb C^n\right)$, the aim is to decompose it as
$$T = \sum_{i=1}^r\lambda_i v_i^{\otimes d},$$
where $v_1,\dots ,v_r\in\mathbb C^n$ are any vectors and $\lambda_1,\dots,\lambda_r\in\mathbb C$. The smallest $r$ for which such a decomposition exists is called the {\em (symmetric) rank} of $T$. Finding the symmetric decomposition of a given tensor $T$ is an NP hard problem \cite{HL} and algorithms for it have been proposed by several authors, for example \cite{BCMT, LO}.

The rank of a generic tensor $T$ is $\binom{n+d-1}{d}$. However, the rank of an odeco tensor $T\in S^d(\mathbb R^n)$ is at most $n$. This means that the set of odeco tensors is a small subset of the set of all tensors. We explore this further in Section~\ref{sec:odeco}.

\begin{remark} Orthogonal tensor decomposition has also been studied in the non-symmetric case \cite{K03,K01}. An odeco tensor is also orthogonally decomposable according to the definition in the non-symmetric case.
\end{remark}

\subsection{Symmetric tensors as homogeneous polynomials}
An equivalent way to think about a symmetric matrix $M\in S^2\left(\mathbb C^n\right)$ is via its corresponding quadratic form $f_M\in \mathbb C[x_1,\dots ,x_n]$ given by
$$f_M\left(x_1,\dots ,x_n\right) = x^TMx = \sum_{i,j} M_{ij}x_ix_j.$$
More generally, a tensor $T\in S^d\left(\mathbb C^n\right) $ can equivalently be represented by a homogeneous polynomial $f_T\in\mathbb C[x_1, \dots , x_n]$ of degree $d$ given by
\begin{align*}
f_T\left(x_1, \dots , x_n\right) = T\cdot x^d :=& \sum_{i_1, \dots , i_d = 1}^n T_{i_1, \dots , i_d}x_{i_1}x_{i_2}\dots x_{i_d}.
\end{align*}
Given $T\in S^d\left(\mathbb C^n\right)$, we can describe the notions of eigenvectors, eigenvalues, and symmetric decomposition in terms of the corresponding polynomial $f_T\in\mathbb C[x_1,\dots ,x_n]$ as follows.

A vector $x\in\mathbb C^n$ is an eigenvector of $T$ with eigenvalue $\lambda$ if and only if
$$\nabla f_T\left(x\right) = \lambda d x.$$
The tensor $T$ can be decomposed as $T = \sum_{i=1}^r\lambda_i v_i^{\otimes d}$ if and only if the corresponding polynomial $f_T$ can be decomposed as
$$f_T\left(x_1,\dots ,x_n\right) = \sum_{i=1}^r \lambda_i \left(v_{i1}x_1 + \cdots + v_{in}x_n\right)^d.$$
Similarly, a real tensor $T\in S^d\left(\mathbb R^n\right)$ is orthogonally decomposable with $T = \lambda_1v_1^{\otimes d} + \cdots + \lambda_n v_n^{\otimes d}$, where $\lambda_1,\dots ,\lambda_k\in\mathbb R$ and $v_1,\dots ,v_k\in\mathbb R^n$ are orthonormal, if and only if $f_T\left(x_1,\dots ,x_n\right) = \lambda_1 \left(v_1\cdot x\right)^d + \cdots + \lambda_n\left(v_n \cdot x\right)^d$.

This equivalent characterization of symmetric tensors as homogeneous polynomials proves to be quite useful in the sequel.

\subsection{Finding an orthogonal decomposition}\label{sec:motivation}

Finding the symmetric decomposition of a given $T\in S^d\left(\mathbb C^n\right)$ is NP hard~\cite{HL}. However, there are simple algorithms that recover the orthogonal decomposition of an odeco tensor $T\in S^d\left(\mathbb R^n\right)$. One such algorithm is the {\em tensor power method} \cite{AGHKT}.

 Let $T\in S^d\left(\mathbb R^n\right)$. If $T$ is orthogonally decomposable, i.e. $T = \sum_{i=1}^k \lambda_i v_i^{\otimes d}$ and $v_1,\dots ,v_k\in\mathbb R^n$ orthonormal, then 
$$T\cdot v_j^{d-1} = \sum_{i=1}^r \lambda_i \left(v_i\cdot v_j\right)^{d-1}v_i = \lambda_jv_j,$$
 for all $j=1,2,\dots ,k$. Thus, $v_1,\dots ,v_k$ are eigenvectors of $T$ with corresponding eigenvalues $\lambda_1,\dots ,\lambda_k$. Note that requiring $T$ and $v_1,\dots,v_k$ to be real forces $\lambda_1,\dots,\lambda_k$ to be real as well.
\begin{definition}
A unit vector $u\in\mathbb R^n$ is a {\em robust eigenvector} of $T\in S^d\left(\mathbb R^n\right)$ if there exists $\epsilon > 0$ such that for all $\theta\in\{u'\in\mathbb R^n : \|u - u'\| < \epsilon\}$, repeated iteration of the map
\begin{align}
\overline{\theta} \mapsto \frac{T\overline{\theta}^{d-1}}{\|T\overline{\theta}^{d-1}\|}, \label{map}
\end{align}
starting from $\theta$ converges to $u$.
\end{definition}
The following theorem shows that if $T$ has an orthogonal decomposition $T = \sum_{i=1}^k\lambda_i v_i^{\otimes d}$, then the set of robust eigenvectors of $T$ is precisely the set $\{v_1, v_2, \dots , v_k\}$, implying that the orthogonal decomposition is unique up to the obvious reordering.
\begin{theorem}[Theorem~4.1, \cite{AGHKT}]\label{thm:robust} Let $T\in S^d\left(\mathbb R^n\right)$ have an orthogonal decomposition $T = \sum_{i=1}^k \lambda_i v_i^{\otimes d}$, where $v_1,\dots ,v_k\in\mathbb R^n$ are orthonormal.
\begin{enumerate}
\item The set of $\theta\in\mathbb R^n$ which do not converge to some $v_i$ under repeated iteration of $\left(\ref{map}\right)$ has measure $0$.
\item The set of robust eigenvectors of $T$ is equal to $\{v_1, v_2, \dots , v_k\}$.
\end{enumerate}
\end{theorem}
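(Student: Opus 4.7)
The plan is to diagonalize the iteration in the orthonormal basis $v_1,\dots,v_n$ (after extending $v_1,\dots,v_k$ if $k<n$) and then exploit the super-linear contraction of the scalar map $c\mapsto c^{d-1}$ toward its largest-modulus coordinate. First I would write $\theta = \sum_{i=1}^n c_i v_i$ with $c_i = v_i\cdot\theta$; using orthonormality (and setting $\lambda_i=0$ for $i>k$),
\[
T\theta^{d-1} \;=\; \sum_{i=1}^n \lambda_i (v_i\cdot\theta)^{d-1} v_i \;=\; \sum_{i=1}^k \lambda_i c_i^{d-1}\, v_i,
\]
so a single step of the map already pushes $\theta$ into $V:=\operatorname{span}(v_1,\dots,v_k)$, and the iteration acts thereafter coordinatewise by $c_i \mapsto \lambda_i c_i^{d-1}/\|T\theta^{d-1}\|$.

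Next I would absorb each $\lambda_i$ into its coordinate via $y_i := \lambda_i^{1/(d-2)} c_i$, under which the iteration becomes $y_i^{(t+1)} = (y_i^{(t)})^{d-1}/\alpha^{(t)}$ with the normalizing constant $\alpha^{(t)}$ independent of $i$. Setting $i^\ast := \operatorname{argmax}_i |y_i^{(0)}|$ and tracking the ratios $q_i^{(t)} := |y_i^{(t)}|/|y_{i^\ast}^{(t)}|$, a one-line induction gives $q_i^{(t)} = \bigl(q_i^{(0)}\bigr)^{(d-1)^t}$, so for $d\ge 3$ the ratio $q_i^{(t)}\to 0$ whenever $i\ne i^\ast$ and stays at $1$ for $i=i^\ast$; hence the unit-normalized iterate $\theta^{(t)}$ converges to $v_{i^\ast}$. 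The only starting points to which this analysis fails to apply are those for which some $c_i^{(t)}$ vanishes, the argmax of $|y_i^{(0)}|$ is non-unique, or $T\theta^{(t)\,d-1}=0$, and each of these is a countable union of proper algebraic subvarieties of $\mathbb R^n$, hence a null set. This proves (1).

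For (2), the same dominance argument immediately yields that each $v_j$ is robust: in a small Euclidean neighborhood of $v_j$ one has $|y_j|>|y_i|$ for every $i\ne j$, so the entire neighborhood flows to $v_j$. Conversely, any robust eigenvector $u$ must be a fixed point of the normalized map, forcing $u=\sum_{i\in S}c_i v_i$ with $\lambda_i c_i^{d-2}$ taking a common value on the support $S$. If $|S|\ge 2$, then arbitrarily small perturbations of $u$ can change which index realizes $\max_i|y_i|$, so trajectories starting arbitrarily close to $u$ leave every neighborhood of $u$ and end up at distinct $v_i$'s, contradicting the robustness hypothesis. Hence $|S|=1$ and $u\in\{v_1,\dots,v_k\}$. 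The main technical obstacle I anticipate is the sign bookkeeping needed to make the substitution $y_i=\lambda_i^{1/(d-2)}c_i$ well-defined for arbitrary signs of $\lambda_i$ and both parities of $d$; in particular one must verify that the signs of the coordinates $c_i^{(t)}$ stabilize after a bounded number of iterations, so that the super-linear contraction argument can be applied cleanly on the tail of the orbit.
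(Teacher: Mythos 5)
The paper does not actually prove this statement: it is quoted verbatim from Anandkumar--Ge--Hsu--Kakade--Telgarsky \cite{AGHKT} (their Theorem~4.1, proved there for $d=3$ with positive coefficients), so there is no in-paper proof to compare against. Judged on its own, your argument is the standard one and is essentially correct: diagonalizing in the basis $v_1,\dots,v_n$, observing that one step lands in $\operatorname{span}(v_1,\dots,v_k)$, and running the ratio recursion $q_i^{(t)}=(q_i^{(0)})^{(d-1)^t}$ is exactly how the quoted theorem is proved, and your measure-zero bookkeeping and your converse in (2) (fixed point $\Rightarrow$ equal values of $|\lambda_i|\,|c_i|^{d-2}$ on the support, then a perturbation that breaks the tie in the argmax) are both sound. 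Two small simplifications are available: the only genuinely bad initial conditions are ``all of $c_1,\dots,c_k$ vanish'' and ``the argmax of $|y_i|$ over $i\le k$ is not unique,'' since $c_{i^*}^{(t)}\neq 0$ propagates by induction, so no countable union over $t$ is needed; and continuity of the map at $u$ gives immediately that a robust eigenvector is a fixed point.

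The one genuine gap is the step you explicitly deferred, and it is not mere bookkeeping. From $c_{i^*}^{(t+1)}=\lambda_{i^*}(c_{i^*}^{(t)})^{d-1}/\alpha^{(t)}$ one gets $\operatorname{sign}(c_{i^*}^{(t+1)})=\operatorname{sign}(\lambda_{i^*})\cdot\operatorname{sign}(c_{i^*}^{(t)})^{d-1}$. For $d$ odd the sign equals $\operatorname{sign}(\lambda_{i^*})$ from the first iterate on, so the orbit converges to $\operatorname{sign}(\lambda_{i^*})\,v_{i^*}$; when $\lambda_{i^*}<0$ this is $-v_{i^*}$, which is only ``some $v_i$'' after re-signing the decomposition (legitimate for $d$ odd, since $\lambda v^{\otimes d}=(-\lambda)(-v)^{\otimes d}$). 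For $d$ even and $\lambda_{i^*}>0$ the sign of $c_{i^*}$ is preserved, so half of the starting points converge to $v_{i^*}$ and half to $-v_{i^*}$; consistently, the map is odd for $d$ even, so $-v_i$ is robust whenever $v_i$ is, and part (2) literally holds only up to sign. For $d$ even and $\lambda_{i^*}<0$ the sign alternates forever and the orbit converges to the two-cycle $\{v_{i^*},-v_{i^*}\}$ rather than to a point (already visible for $f_T=-x^4$), so your claim that ``the signs stabilize after a bounded number of iterations'' is false in that case and the theorem as transplanted to general $d$ and general real $\lambda_i$ fails. So to finish, you must either restrict to $\lambda_i>0$ (as in \cite{AGHKT}) or state the conclusion up to the sign ambiguities above; once that is done, your argument goes through.
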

Therefore, to recover the orthogonal decomposition of $T$, one needs to find the robust eigenvectors. The definition of robust eigenvectors suggests an algorithm to compute them, using repeated iteration of the map (\ref{map}) starting with random vectors $u\in\mathbb R^n$. 
\begin{algorithm}
\caption{The Tensor Power Method}
\begin{algorithmic}[1]
\State \textbf{Input:} an orthogonally decomposable tensor $T$.
\State Set $i=1$.
\State \textbf{Repeat} until $T = 0$.
\State\indent    Choose random $u\in\mathbb R^m$.
\State\indent    Let $v_i$ be the result of repeated iteration of (\ref{map}) starting with $u$.
\State\indent    Compute the eigenvalue $\lambda_i$ corresponding to $v_i$, from the equation $Tv_i^{d-1} = \lambda_i v_i$.
\State\indent    Set $T = T - \lambda_i v_i^{\otimes d}$.
\State\indent $i \leftarrow i+1$.
\State \textbf{Output} $v_1,\dots ,v_k$ and $\lambda_1,\dots ,\lambda_k$.
\end{algorithmic}
\end{algorithm}
%

In certain cases, this algorithm can be used to find the symmetric decomposition of a given tensor.  For example, the authors of \cite{AGHKT} consider a class of statistical models, such as the exchangeable single topic model, in which one observes tensors $T_2$ and $T_3$, where $T_d = \sum_{i=1}^k \omega_i \mu_i^{\otimes d}$ for $d = 2,3$ and the aim is to recover the unknown parameters $\omega = \left(\omega_1,\dots ,\omega_k\right)\in\mathbb R^k$ and $\mu_1,\dots ,\mu_k\in \mathbb R^n$.  (Note that $T_2$ and $T_3$ have decompositions using the same vectors and observing both of them gives more information than observing only $T_3$). This is done by transforming $T_2$ and $T_3$ (in an invertible way) into orthogonally decomposable tensors $\tilde{T}_2$ and $\tilde T_3$, where $\tilde{T}_d = \sum_{i=1}^k\tilde{\omega}_i \tilde{\mu}_i^{\otimes d}$ and $\tilde{\mu}_1,\dots ,\tilde{\mu}_k$ are orthonormal, $d=2,3$. Then, they use the tensor power method to find $\tilde{\mu}_1,\dots ,\tilde{\mu}_k$ and $\tilde{\omega}_1,\dots ,\tilde{\omega}_k$ and use the inverse transformation to recover the original $\mu_1,\dots ,\mu_k$ and $\omega_1,\dots ,\omega_k$.

\begin{remark} As mentioned above, Theorem \ref{thm:robust} also implies that an odeco tensor $T$ has a unique orthogonal decomposition. That is because the elements in the orthogonal decomposition are uniquely determined as the robust eigenvectors $v_1,\dots, v_k$ and the corresponding constants $\lambda_1,\dots, \lambda_k$ are uniquely determined by $\lambda_i = T\cdot v_i^d$.
\end{remark}

Another method, described in \cite{BCMT}, can also be used to efficiently compute the decomposition of a symmetric tensor $T$ of rank at most $n$. It involves computing generalized eigenvectors of sub-matrices of the Hankel matrices associated to $T$.

\section{The Variety of Eigenvectors of a Tensor}\label{sec:eigenvectors}

In this section, we are going to study the set of all eigenvectors of a given orthogonally decomposable tensor.

As we mentioned in the introduction, a symmetric tensor $T\in S^d\left(\mathbb R^n\right)$ can equivalently be represented by a homogeneous polynomial $f_T\in\mathbb R[x_1, \dots , x_n]$ of degree $d$. Indeed, given $T$, we obtain $f_T$ by
$$f_T\left(x_1,\dots ,x_n\right) = \sum_{i_1, \dots , i_d}T_{i_1, \dots , i_d}x_{i_1}\cdots x_{i_d}.$$
Then, for $x\in\mathbb C^n$, $Tx^{d-1} = \lambda x$ is equivalent to $\nabla f_T\left(x\right) = d \lambda x$, i.e. $\nabla f_T\left(x\right)$ and $x$ are parallel to each other. This is equivalent to the vanishing of the $2\times 2$ minors of the $n\times 2$ matrix $\begin{bmatrix}\nabla f_T\left(x\right)  \big|  x\end{bmatrix}$.
\begin{definition} The {\em variety of eigenvectors} $\mathcal V_T$ of a given symmetric tensor $T$ with corresponding polynomial $f_T$ is the zero set of the $2\times 2$ minors of the matrix $\begin{bmatrix}\nabla f_T\left(x\right)  \big|  x\end{bmatrix}$.
\end{definition}

\begin{remark}
Consider the gradient map as a map on projective spaces:
$$\nabla f_T: \mathbb {CP}^{n-1} \to \mathbb {CP}^{n-1}$$
$$[x]\mapsto [\nabla f_T\left(x\right)].$$
Then, the eigenvectors of $f_T$ are precisely the fixed points of $\nabla f_T$. This map is well-defined provided the hypersurface $\{f_T = 0\}$ has no singular points.
\end{remark}

\noindent The aim of this section is to prove the following theorem.
\begin{theorem}\label{eigenvectorRepresentation}
Let $T\in S^d\left(\mathbb R^n\right)$ be odeco with $f_T\left(x\right) = \sum_{i=1}^l \lambda_i \left(v_i\cdot x\right)^d$, where $v_1,\dots ,v_l\in\mathbb R^n$ are orthonormal. Assume that $1\leq l\leq n$ and $\lambda_1,\dots ,\lambda_l\neq 0$ . Then, $T$ has $\frac{\left(d-1\right)^l-1}{d-2}$ eigenvectors in $\mathbb C^n$, given explicitly in terms of $v_1,\dots ,v_l$ and the $\left(d-2\right)$-nd roots of $\lambda_1,\dots ,\lambda_l$ as follows. Let  $V = \begin{bmatrix}-&v_1& - \\
& \vdots & \\
- & v_l & -\end{bmatrix}\in\mathbb R^{l\times n}$. Then, for any $1\leq k\leq l$, any $\mathcal I = \{i_1, i_2,\dots ,i_k\}\subseteq [l]$ and any $\left(k-1\right)$-tuple $\eta_{1},\dots ,\eta_{k-1}$ of $\left(d-2\right)$-nd roots of unity, there is one eigenvector $w$, up to scaling, where $w = V^T\left(y_1,\dots ,y_l\right)^T$ and
$$y_i = \begin{cases} \eta_{j}\lambda_{i_j}^{-\frac1{d-2}} & \text{ if } i = i_j \text{ and }j\in\{1,\dots ,k-1\}\\
\lambda_{i_k}^{-\frac1{d-2}} & \text{ if } i = i_k\\
0 & \text{ if }i\not\in \mathcal I.
\end{cases}$$
The rest of the eigenvectors are all the elements in the nullspace of $V$.

\end{theorem}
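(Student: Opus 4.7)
The plan is to pass to the polynomial/gradient formulation. Computing
$$\nabla f_T(x) = d\sum_{i=1}^l \lambda_i (v_i \cdot x)^{d-1} v_i,$$
the condition $\nabla f_T(x) = d\lambda x$ becomes $\sum_{i=1}^l \lambda_i (v_i\cdot x)^{d-1} v_i = \lambda x$. Since the left-hand side lies in the row span of $V$, either $\lambda = 0$, in which case linear independence of $v_1,\dots,v_l$ together with the hypothesis $\lambda_i\neq 0$ forces $v_i\cdot x = 0$ for every $i$ and hence $x\in\ker V$; or else $\lambda\neq 0$ and $x$ must itself lie in the row span of $V$. This already accounts for the second clause of the theorem, so the remaining task is to describe the eigenvectors inside the row span of $V$.

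For those, I write $x = V^T y$ with $y = (y_1,\dots,y_l)^T\in \mathbb C^l$; by orthonormality, $v_i\cdot x = y_i$. Substituting into the eigenvector equation and again using linear independence of the $v_i$ decouples the single vector equation into $l$ scalar equations
$$\lambda_i\, y_i^{\,d-1} = \lambda\, y_i, \qquad i=1,\dots,l,$$
so that for each $i$ one has either $y_i = 0$ or $y_i^{\,d-2} = \lambda/\lambda_i$. Let $\mathcal I = \{i : y_i\neq 0\}$, which is nonempty since $x\neq 0$. Fixing a subset $\mathcal I\subseteq [l]$ of size $k$ thus gives rise to all eigenvectors supported on $\mathcal I$.

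The final step is the enumeration up to the scaling equivalence $(x,\lambda)\mapsto(tx, t^{d-2}\lambda)$. Normalizing by choosing $\lambda = 1$, for each $i\in\mathcal I$ there are exactly $d-2$ admissible values $y_i = \eta_i \lambda_i^{-1/(d-2)}$, where $\eta_i$ ranges over the $(d-2)$-nd roots of unity (after fixing a branch of each fractional power). This produces $(d-2)^{|\mathcal I|}$ solutions per fixed $\mathcal I$. The stabilizer of $\lambda=1$ in the scaling group consists of the $(d-2)$ scalars $t$ with $t^{d-2}=1$, which acts freely by simultaneous multiplication on the tuple $(\eta_i)_{i\in\mathcal I}$, cutting the count down to $(d-2)^{|\mathcal I|-1}$ equivalence classes per $\mathcal I$. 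A canonical representative in each class is obtained by setting $\eta_{i_k}=1$ at a distinguished index $i_k\in\mathcal I$, which reproduces exactly the formula in the statement. Summing over nonempty subsets $\mathcal I\subseteq[l]$ and applying the binomial theorem gives
$$\sum_{k=1}^l \binom{l}{k}(d-2)^{k-1} \;=\; \frac{(1+(d-2))^l - 1}{d-2} \;=\; \frac{(d-1)^l - 1}{d-2},$$
as claimed. The main technical delicacy, and the step most liable to error, is to keep the branch and scaling conventions straight so that the listed representatives genuinely enumerate each equivalence class of eigenpairs exactly once; everything else is a direct consequence of orthonormality decoupling the system.
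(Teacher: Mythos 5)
Your proof is correct and reaches all the conclusions of the theorem, but it takes a genuinely different and considerably more elementary route than the paper. Both proofs begin the same way: pass to the gradient formulation, complete $v_1,\dots,v_l$ to a basis (or, in your version, split $\mathbb C^n$ into the row span of $V$ and $\ker V$), and use orthonormality to change coordinates to $y = Vx$. The difference is in how the resulting system in the $y$-variables is handled. The paper treats it as a commutative-algebra problem: it forms the ideal of $2\times 2$ minors $\langle x_ix_j(\lambda_i x_i^{d-2}-\lambda_j x_j^{d-2})\rangle$, and invokes the Eisenbud--Sturmfels theory of binomial ideals (their Theorem 2.1 and Corollary 2.5) to produce the primary decomposition of its radical into maximal ideals $I_{\mathcal I,\eta}$, reading off one eigenvector per component. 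You instead observe that linear independence of the $v_i$ decouples the single vector equation into $l$ scalar equations $\lambda_i y_i^{d-1}=\lambda y_i$, each of which is trivially solvable as $y_i=0$ or $y_i^{d-2}=\lambda/\lambda_i$, and then handle the projective identification by fixing the slice $\lambda=1$ and quotienting by the residual free action of the $(d-2)$-nd roots of unity. Your argument is shorter, self-contained, and avoids the binomial-ideal machinery entirely; the paper's approach is heavier but yields, as a byproduct, the explicit prime decomposition of the ideal of minors, which is of independent interest and is what underlies the remark about the eigenvector count of \cite{CS}. One small point worth making explicit in your write-up is that the action of the stabilizer on the tuples $(\eta_i)_{i\in\mathcal I}$ is free precisely because $\mathcal I$ is nonempty, so the orbit count $(d-2)^{|\mathcal I|-1}$ is exact; you gesture at this but it is the only place a reader might pause.
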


\begin{remark} It is known by \cite{CS} that if a tensor $T\in S^d\left(\mathbb R^n\right)$ has finitely many equivalence classes of eigenpairs $\left(x, \lambda\right)$ over $\mathbb C$, then their number, counted with multiplicity, is equal to $\frac{\left(d-1\right)^n-1}{d-2}$. If the entries of $T$ are sufficiently generic, then all multiplicities are equal to 1, so there are exactly $\frac{\left(d-1\right)^n-1}{d-2}$ equivalence classes of eigenpairs.

In the proof of Theorem \ref{eigenvectorRepresentation} we independently show that an odeco tensor $T$ with orthogonal decomposition $T = \lambda_1v_1^{\otimes d} + \cdots + \lambda_n v_n^{\otimes d}$, such that $\lambda_1,\dots ,\lambda_n\neq 0$ has  finitely many equivalence classes of eigenvectors and their number is exactly $\frac{\left(d-1\right)^n-1}{d-2}$. 

\end{remark}

%

We illustrate Theorem \ref{eigenvectorRepresentation} by two simple concrete examples.
\begin{example}
Let $d = n = 3$ and consider the odeco tensor $T$ with polynomial form
$$f_T\left(x,y,z\right) = \lambda_1x^3 + \lambda_2y^3 + \lambda_3z^3.$$
This type of polynomial is called a Fermat polynomial. In this case $v_1 = \left(1,0,0\right), v_2 = \left(0,1,0\right), v_3 = \left(0,0,1\right)$ and the matrix $V = I$. Since $d-2 =1$, taking the $\left(d-2\right)$-nd root is the identity map. Thus, the eigenvectors of $T$ are as follows.

When $k = 1$, $\mathcal I = \{1\}, \{2\},$ or $\{3\}$. The corresponding three eigenvectors are
$$\left(\frac 1{\lambda_1}, 0, 0\right)^T, \left(0, \frac 1{\lambda_2}, 0\right)^T, \left(0,0, \frac 1{\lambda_3}\right)^T.$$
When $k=2$, $\mathcal I = \{1,2\}, \{1,3\},$ or $\{ 2,3\}$. The corresponding eigenvectors are
$$\left(\frac 1{\lambda_1}, \frac 1{\lambda_2}, 0\right)^T, \left(\frac 1{\lambda_1}, 0, \frac 1{\lambda_3}\right)^T, \left(0, \frac 1{\lambda_2}, \frac 1{\lambda_3}\right)^T.$$
When $k=3$, $\mathcal I = \{1,2,3\}$ and the corresponding eigenvector is
$$\left(\frac 1{\lambda_1}, \frac 1{\lambda_2}, \frac 1{\lambda_3}\right)^T.$$

Figure 1 shows what these eigenvectors look like geometrically.
\end{example}

\begin{figure}
\begin{center}
\includegraphics[width=0.6\textwidth]{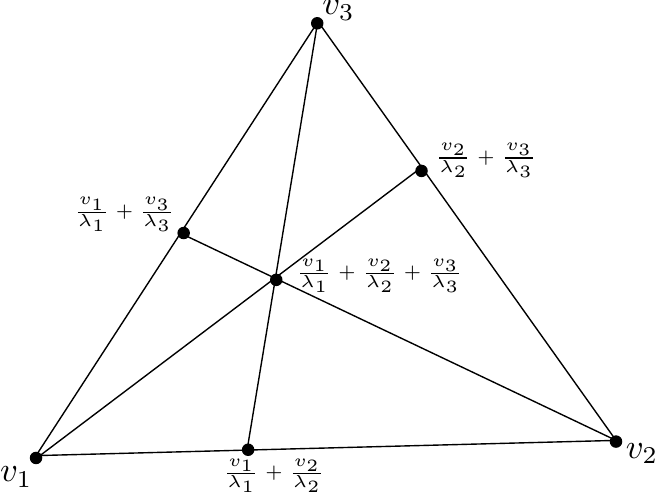}
\caption{This figure shows the structure of the eigenvectors inside $\mathbb {CP}^2$ of an odeco tensor $T\in S^3\left(\mathbb R^3\right)$ such that $T = \lambda_1v_1^{\otimes 3} + \lambda_2 v_2^{\otimes 3} + \lambda_3v_3^{\otimes 3}$ with $\lambda_1,\lambda_2,\lambda_3\neq 0$.}

\end{center}
\end{figure}

\begin{example} Let $d = 4, n=4$ and consider $T\in S^4(\mathbb R^4)$ with corresponding polynomial
$$f_T(x_1,\dots,x_4) = x_1^4 + 2x_2^4.$$
In the notation of Theorem \ref{eigenvectorRepresentation}, the number of nonzero coefficients is $l=2 < n$. We have that $v_1 = e_1, v_2=e_2$ and $\lambda_1=1, \lambda_2=2$. Since $d-2 = 2$, the roots $\eta_i$ can be $\pm 1$. Thus, the eigenvectors of $T$ are as follows.

When $k=1$, $\mathcal I = \{1\},\{2\}$. The corresponding eigenvectors are
$$(1,0,0,0)^T,(0,\frac1{\sqrt 2},0,0)^T.$$

When $k=2$, $\mathcal I = \{1,2\}$. The corresponding eigenvectors are
$$(1,\frac1{\sqrt 2},0,0)^T, (-1,\frac1{\sqrt 2}, 0, 0)^T.$$

The rest of the eigenvectors are all vectors perpendicular to $e_1$ and $e_2$, i.e.
$$(0,0,a,b)^T$$
for any $a,b\in\mathbb C$ not both zero.
\end{example}

%
%
%

In the rest of this section we prove Theorem \ref{eigenvectorRepresentation}. We proceed as follows. First we show that the theorem is valid when $f_T = \lambda_1 x_1^{d} + \cdots + \lambda_n v_n^{d}$, where $\lambda_1,\dots ,\lambda_n\neq 0$. This is done in Lemma  \ref{lemma:FermatHypersurface}. For the general case, $f_T = \lambda_1\left(v_1\cdot x\right)^d + \cdots + \lambda_l \left(v_l\cdot x\right)^d$, where $\lambda_1,\dots ,\lambda_l\neq 0$ and $v_1,\dots ,v_l$ are orthonormal, we observe that setting $y_i = v_i\cdot x$ the eigenvectors of the Fermat polynomial tensor $\lambda_1 y_1^d + \cdots + \lambda_l y_l^d$ are in a 1-to-1 correspondence with some of the eigenvectors of $T$ via the transformation given by the matrix $V$ with rows $v_1,\dots ,v_l$. This is how we recover the formula in Theorem \ref{eigenvectorRepresentation}.

\begin{definition} Given $f\left(x_1,\dots ,x_n\right) = \lambda_1x_1^d + \cdots + \lambda_n x_n^d$, $\mathcal I = \{i_1,\dots ,i_k\}\subseteq\{1,2,\dots ,n\}$, and $\eta = \{\eta_1,\dots ,\eta_{k-1}\}$ such that $\eta_1,\dots ,\eta_{k-1}$ are $\left(d-2\right)$-nd roots of unity, and define the ideal
$$I_{\mathcal I, \eta} = \langle \lambda_{i_1}^{\frac 1{d-2}} x_{i_1} - \eta_1\lambda_{i_k}^{\frac1{d-2}}, \dots , \lambda_{i_{k-1}}^{\frac1{d-2}}x_{i_{k-1}} - \eta_{k-1}\lambda_{i_k}^{\frac1{d-2}} x_{i_k} \rangle + \langle x_j | j\not\in \mathcal I\rangle$$
in the polynomial ring $\mathbb C[x_1,\dots ,x_n]$.
\end{definition}
\begin{lemma}\label{lemma:FermatHypersurface}
Theorem \ref{eigenvectorRepresentation} is true in the case $f_T\left(x_1, \dots , x_n\right) = \lambda_1 x_1^d + \lambda_2 x_2^d + \cdots + \lambda_n x_n^d$, where $\lambda_1,\dots ,\lambda_n\neq 0$. In particular,
the radical of the ideal $I$ of $2\times 2$ minors of $\begin{bmatrix}\nabla f(x) | x\end{bmatrix}$ can be decomposed as follows.
\begin{align}\label{idealDecomposition}\sqrt{I} =\bigcap_{\mathcal I\subseteq[n], \eta=\{\eta_1,\dots ,\eta_{|\mathcal I|-1}\}}I_{\mathcal I, \eta},\end{align}
where $\eta_1, \dots , \eta_{k-1}$ are $\left(d-2\right)$-nd roots of unity. For every $k\in\{1,\dots, n\}$, there are $\binom n k \left(d-2\right)^{k-1}$ homogeneous prime ideals $I_{\mathcal I, \eta}$ with $|\mathcal I| = k$. Each ideal $I_{\mathcal I, \eta}$ has exactly one solution in $\mathbb C\mathbb P^{n-1}$, representing one eigenvector, namely $w = \left(w_1: \dots :w_n\right)$ such that
$$w_i = \begin{cases} \eta_{l} \frac1{\lambda_{i_l}}^{-\frac1{d-2}} & \text{ if } i = i_l \text{ and } l\leq k-1,\\\
\lambda_{i_k}^{-\frac1{d-2}}& \text{ if } i = i_k, \\
0 & \text{ if } i\not\in \mathcal I.
\end{cases}$$
The total number of such solutions is $\frac{\left(d-1\right)^n - 1}{d-2}$.
\end{lemma}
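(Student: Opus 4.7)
The plan is to analyze the ideal $I$ generated by the $2\times 2$ minors of $[\nabla f \mid x]$ directly, classify the solutions of the corresponding system in $\mathbb{CP}^{n-1}$, and then identify each irreducible component with one of the ideals $I_{\mathcal{I},\eta}$.

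First I would compute the generators of $I$ explicitly. Since $\nabla f_T = (d\lambda_1 x_1^{d-1}, \ldots, d\lambda_n x_n^{d-1})$, the $2\times 2$ minors are (up to the factor $d$) the binomials $g_{ij} = \lambda_i x_i^{d-1} x_j - \lambda_j x_j^{d-1} x_i = x_i x_j(\lambda_i x_i^{d-2} - \lambda_j x_j^{d-2})$ for $i\neq j$. From this factorization it is immediate that a point $w \in \mathbb{CP}^{n-1}$ lies in $V(I)$ if and only if, on its support $\mathcal{I} = \{i : w_i \neq 0\}$, the quantity $\lambda_i w_i^{d-2}$ is constant in $i$. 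So I would parameterize such $w$: pick the element $i_k$ of $\mathcal{I}$ and normalize $w_{i_k} = \lambda_{i_k}^{-1/(d-2)}$ for a fixed choice of root; then each other entry $w_{i_l}$ must satisfy $w_{i_l}^{d-2} = \lambda_{i_l}^{-1}$, hence $w_{i_l} = \eta_l \lambda_{i_l}^{-1/(d-2)}$ for some $(d-2)$-nd root of unity $\eta_l$. This produces exactly the point listed in the statement for $(\mathcal{I},\eta)$.

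Next I would verify the two inclusions of \eqref{idealDecomposition}. For $I \subseteq I_{\mathcal{I},\eta}$: if $i$ or $j$ is outside $\mathcal{I}$, then $g_{ij}$ lies in $\langle x_m : m\notin\mathcal{I}\rangle$; if both lie in $\mathcal{I}$, the linear generators of $I_{\mathcal{I},\eta}$ give $\lambda_{i_l}^{1/(d-2)} x_{i_l} \equiv \eta_l \lambda_{i_k}^{1/(d-2)} x_{i_k} \pmod{I_{\mathcal{I},\eta}}$, so raising to the $(d-2)$-nd power yields $\lambda_{i_l} x_{i_l}^{d-2} \equiv \lambda_{i_k} x_{i_k}^{d-2}$, hence $\lambda_{i_l} x_{i_l}^{d-2}$ is constant modulo $I_{\mathcal{I},\eta}$ and $g_{i_l i_m} = x_{i_l} x_{i_m}(\lambda_{i_l} x_{i_l}^{d-2} - \lambda_{i_m} x_{i_m}^{d-2}) \in I_{\mathcal{I},\eta}$. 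For the reverse inclusion of radicals I would use the Nullstellensatz: each $I_{\mathcal{I},\eta}$ is generated by linear forms (the binomials become linear once we absorb the $(d-2)$-nd roots of $\lambda$ into the coefficients), so the quotient $\mathbb{C}[x]/I_{\mathcal{I},\eta}$ is a polynomial ring in one variable, in particular a domain; thus $I_{\mathcal{I},\eta}$ is prime (and radical), cuts out a single line through the origin, i.e.\ one projective point, which by construction is the point found in the previous paragraph. Since every solution in $V(I)$ arises from some $(\mathcal{I},\eta)$, we get $V(I) = \bigcup V(I_{\mathcal{I},\eta})$, and taking ideals gives the claimed decomposition of $\sqrt{I}$.

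Finally I would carry out the counting: for each $k \in \{1,\ldots,n\}$ there are $\binom{n}{k}$ choices for $\mathcal{I}$ and $(d-2)^{k-1}$ choices for $\eta$, so the total number of projective eigenvectors is
\[
\sum_{k=1}^n \binom{n}{k}(d-2)^{k-1} \;=\; \frac{1}{d-2}\Bigl(\sum_{k=0}^n \binom{n}{k}(d-2)^k - 1\Bigr) \;=\; \frac{(d-1)^n - 1}{d-2},
\]
matching the Cartwright--Sturmfels count. The main subtlety, rather than a true obstacle, is bookkeeping the $(d-2)$-nd roots: one must fix once and for all a choice of $\lambda_i^{1/(d-2)}$ so that the ideals $I_{\mathcal{I},\eta}$ are literally well defined, with the ambiguity of this choice being absorbed into the multiplicative freedom of the $\eta_l$'s. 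Everything else reduces to the elementary factorization $g_{ij} = x_i x_j(\lambda_i x_i^{d-2} - \lambda_j x_j^{d-2})$.
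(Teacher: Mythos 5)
Your proof is correct, but it takes a genuinely more elementary route than the paper. The paper treats $I$ as a binomial-adjacent ideal and invokes the Eisenbud--Sturmfels structure theory: after localizing away from the coordinate hyperplanes on a support set $\mathcal I$, it identifies $(I_{\mathcal I}:\langle x_i:i\in\mathcal I\rangle^\infty)$ with a lattice ideal $I_+(\rho)$ for a partial character $\rho$ on the lattice $\langle (d-2)(e_{i_j}-e_{i_k})\rangle$, and then uses Corollary~2.5 of [ES] to enumerate its minimal primes as the $I_+(\rho')$ for the $(d-2)^{k-1}$ extensions $\rho'$ of $\rho$ to the saturated lattice. You instead classify the points of $V(I)$ by hand from the factorization $g_{ij}=x_ix_j(\lambda_ix_i^{d-2}-\lambda_jx_j^{d-2})$, observe that each $I_{\mathcal I,\eta}$ is a linear prime cutting out exactly one of those points, verify $I\subseteq I_{\mathcal I,\eta}$, and conclude $\sqrt I=\bigcap I_{\mathcal I,\eta}$ by the Nullstellensatz. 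Since the lemma only asserts a decomposition of the \emph{radical} (not a primary decomposition of $I$ itself), your approach fully suffices and avoids the machinery of \cite{ES}; what the paper's route buys is the identification of the components directly as lattice ideals, which packages the root-of-unity bookkeeping into the character-extension count $|L/L_\rho|=(d-2)^{k-1}$. Two small points you should make explicit: that distinct pairs $(\mathcal I,\eta)$ yield distinct projective points (so the intersection is irredundant and the count $\binom nk(d-2)^{k-1}$ is exact), which follows from your normalization $w_{i_k}=\lambda_{i_k}^{-1/(d-2)}$; and, as you already note, that a choice of $(d-2)$-nd roots of the $\lambda_i$ must be fixed once and for all so that the ideals $I_{\mathcal I,\eta}$ are well defined, the ambiguity being absorbed into the $\eta_l$.
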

\begin{proof}
Note that in this case, up to a factor of $d$ in the first row, we have that
$$\begin{bmatrix}\nabla f\left(x\right)  \big|  x\end{bmatrix} = \begin{bmatrix}
\lambda_1x_1^{d-1} & x_1\\
\lambda_2 x_2^{d-1} & x_2\\
\vdots & \vdots\\
\lambda_n x_n^{d-1} & x_{n}
\end{bmatrix}$$
Therefore, the ideal of $2\times 2$ minors is given by
$$I = \langle x_i x_j\left(\lambda_i x_i^{d-2} - \lambda_j x_j^{d-2}\right) : i\neq j \rangle.$$
We would like to decompose the variety of this ideal. Note that for any primary ideal $P\supseteq I$ its associated prime $\sqrt P$ would either contain $x_ix_j$ or $\lambda_i x_i^{d-2} - \lambda_j x_j^{d-2}$ for all $i\neq j$. Suppose that for a given $P\supseteq I$, $\sqrt P$ contains exactly $n-k$ of the variables $x_1, \dots , x_n$. Let $\mathcal I = \{i_1,\dots ,i_k\}\subseteq [n]$ and assume that $\sqrt P$ contains exactly those $x_i$ for which $i\not\in \mathcal I$. Thus, $\sqrt P$ also contains $\lambda_i x_i^{d-2} - \lambda_j x_j^{d-2}$ for $i\neq j, i,j \in\mathcal I$. 
Moreover, we can write $\sqrt P$ as $\sqrt P = \langle x_i : i\not\in \mathcal I\rangle + \sqrt P\cap \mathbb C[x_i: i\in \mathcal I]$. Then, the ideal $\sqrt P\cap \mathbb C[x_i: i\in \mathcal I]$ is prime, it doesn't contain $x_i$ for $i\in\mathcal I$ and contains $I_{\mathcal I}\subseteq \mathbb C[x_i: i\in\mathcal I]$, where
$$I_{\mathcal I} := \langle \lambda_i x_i^{d-2} - \lambda_j x_j^{d-2} : i\neq j, i,j \in\mathcal I\rangle = \langle \lambda_{i_j} x_{i_j}^{d-2} - \lambda_{i_{j+1}}x_{i_{j+1}}^{d-2} : j=1,\dots ,k-1\rangle.$$
Therefore, $\sqrt P\cap  \mathbb C[x_i: i\in \mathcal I]$ is a prime ideal containing $(I_{\mathcal I} : \langle x_i: i\in\mathcal I\rangle^{\infty})$.

We now describe the decomposition of the ideal $(I_{\mathcal I} : \langle x_i: i\in\mathcal I\rangle^{\infty})$ following Theorem 2.1 and Corollary 2.5 in \cite{ES}. Recall that $\mathcal I = \{i_1,\dots,i_k\}\subseteq [n]$. Let $L_\rho := \langle \left(d-2\right)\left(e_{i_j} - e_{i_k}\right) : j=1,\dots ,k-1\rangle$ be a lattice with {\em partial character} $\rho:L_{\rho} \to \mathbb C^*$ given by
$$\rho\left(\left(d-2\right)\left(e_{i_j} - e_{i_k}\right)\right) = \frac{\lambda_{i_k}}{\lambda_{i_j}}.$$
For any partial character $\sigma : L_{\sigma} \to \mathbb C^*$, define the ideal $I_+(\sigma) := \langle x^{m_+} - \sigma\left(m\right)x^{m_-}: m\in L_{\sigma}\rangle$, where $m = m_+ - m_-$ and $m_+, m_-$ have nonnegative entries. From this definition, we see that 
$$I_+(\rho) = (I_{\mathcal I} : \langle x_i: i\in\mathcal I\rangle^{\infty}).$$

Then, by Corollary 2.5 in \cite{ES}, the decomposition of $(I_{\mathcal I}:\langle x_i:i\in\mathcal I\rangle^{\infty})=I_+\left(\rho\right)$ is 
$$(I_{\mathcal I}:\langle x_i:i\in\mathcal I\rangle^{\infty} )= \bigcap_{\rho' \text{ extends } \rho \text{ to } L } I_+\left(\rho'\right),$$
where $L$ is a sublattice of $\mathbb Z^n$ such that $L_{\rho}\subseteq L \subseteq \mathbb Z^n$ and $|L/L_{\rho}|$ is finite.
In this case, we can choose
$$L = \langle e_{i_j} - e_{i_k} : j = 1,\dots ,k-1\rangle.$$
Then, $|L/L_{\rho}| = \left(d-2\right)^{k-1}$. Moreover, by the same theorem, the number of $\rho'$ extending $\rho$ is exactly $|L/L_{\rho}| = \left(d-2\right)^{k-1}$. Also, note that each such $\rho' : L\to \mathbb C^*$ is uniquely defined by the values
$$\eta_j \left(\frac{\lambda_{i_k}}{\lambda_{i_j}}\right)^{\frac1{d-2}} := \rho'\left(e_{i_j} - e_{i_k}\right)$$
for some $\left(d-2\right)$-nd root of unity $\eta_j$. Therefore,
$$I_+\left(\rho'\right) = \left\langle x_{i_j} - \eta_j \left(\frac{\lambda_{i_k}}{\lambda_{i_j}}\right)^{\frac1{d-2}} x_{i_k} : j = 1,2,\dots ,k-1\right\rangle$$
and each such ideal is maximal inside $\mathbb C[x)i:i\in\mathcal I]$.
Thus, the prime $\sqrt P\cap \mathbb C[x_i:i\in \mathcal I]$ must contain one of the ideals $I_+(\rho')$.
Therefore, $\sqrt P$ contains $\langle x_i:i\not \in\mathcal I\rangle + I_+(\rho')$ for some $\rho'$. But this ideal is maximal in $\mathbb C[x_1,\dots, x_n]$, therefore, $\sqrt P = \langle x_i:i\not \in\mathcal I\rangle + I_+(\rho')$.

Therefore, \eqref{idealDecomposition} holds and the minimal associated primes of the ideal $I$ are 
$$I_{\mathcal I, \eta} = \langle x_i:i\not \in\mathcal I\rangle + \left\langle x_{i_j} - \eta_j \left(\frac{\lambda_{i_k}}{\lambda_{i_j}}\right)^{\frac1{d-2}} x_{i_k} : j = 1,2,\dots ,k-1\right\rangle,$$
where $\mathcal I=\{i_1,\dots,i_k\}\subseteq [n]$ and $\eta_1,\dots,\eta_{k-1}$ are $(d-2)$-nd roots of unity. Each ideal $I_{\mathcal I, \eta}$ is zero-dimensional and corresponds to one eigenvector $w = (w_1:\cdots : w_n)$, where
$$w_i = \begin{cases}
  \eta_{l} \frac1{\lambda_{i_l}}^{-\frac1{d-2}} & \text{ if } i = i_l \text{ and } l\leq k-1,\\\
\lambda_{i_k}^{-\frac1{d-2}}& \text{ if } i = i_k, \\
0 & \text{ if } i\not\in \mathcal I.
 \end{cases}$$
%
%
Moreover, since there are $\binom nk$ options for choosing $\mathcal I\subseteq [n]$ with $|\mathcal I | = k$ and $\left(d-2\right)^{k-1}$ options for choosing $\eta = \left(\eta_1,\dots ,\eta_{k-1}\right)$, the total number of eigenvectors of $f$ is
\begin{align*}
&\sum_{k=1}^{n}\binom{n}{k} \left(d-2\right)^{k-1} = \frac1{d-2}\sum_{k=1}^n\binom{n}k \left(d-2\right)^k \\
&= \frac1{d-2}\left(\left(d-2+1\right)^n - 1\right) = \frac{\left(d-1\right)^n-1}{d-2},
\end{align*}
recovering the formula expected by \cite{CS}.
\end{proof}
\noindent Now, we proceed with the proof of Theorem \ref{eigenvectorRepresentation}.
\begin{proof}[Proof of Theorem \ref{eigenvectorRepresentation}]
Let $T = \sum_{i=1}^l \lambda_i v_i^{\otimes d}$ be odeco with $\lambda_1,\dots,\lambda_l\neq 0$. Then,
$$f_T\left(x\right) = \sum_{i=1}^l \lambda_i\left(v_i\cdot x\right)^d$$
and
$$\frac1 d\nabla f_T\left(x\right) = \sum_{i=1}^l \lambda_i\left(v_i\cdot x\right)^{d-1} v_i.$$
If $x\in\mathbb C^n$ is an eigenvector, then
$$\frac1 d\nabla f_T\left(x\right)= \sum_{i=1}^l \lambda_i\left(v_i\cdot x\right)^{d-1} v_i = \lambda x.$$
Let $v_{l+1}, \dots, v_n\in\mathbb R^n$ complete $v_1,\dots,v_l$ to an orthonormal basis of $\mathbb R^n$. Then, they are also a basis of $\mathbb C^n$ and $x = \sum_{i=1}^n \left(v_i\cdot x\right)v_i$ for any $x\in\mathbb C^n$, where $v_i\cdot x = \sum_j v_{ij}x_j$ is still the usual dot product on $\mathbb R^n$. Since the $v_i$ form a basis of $\mathbb C^n$ and
$$\sum_{i=1}^l \lambda_i\left(v_i\cdot x\right)^{d-1} v_i  = \lambda  \sum_{i=1}^n \left(v_i\cdot x\right)v_i,$$
then $x$ is an eigenvector if and only if the vectors $\left(\lambda_1\left(v_1\cdot x\right)^{d-1}, \dots , \lambda_l\left(v_n\cdot x\right)^{d-1},0,\dots,0\right)$ and $\left(v_1\cdot x, \dots , v_n\cdot x\right)$ are parallel. Let $\tilde{V} = \begin{bmatrix} -&v_1&-\\&\vdots&\\-&v_n&-\end{bmatrix}\in\mathbb R^{n\times n}$ be the orthogonal matrix whose rows are $v_1,\dots,v_n$.
Let
$$ y_i = \left(v_i \cdot x\right),{\text{ i.e. }} y = \tilde{V}x.$$
Then, an equivalent description of $x$ being an eigenvector is that $\left(\lambda_1 y_1^{d-1},\dots ,\lambda_l y_l^{d-1},0,\dots,0\right)$ and $y$ are parallel. In other words, the matrix 
$$\begin{bmatrix}\lambda_1 y_1^{d-1}&\cdots&\lambda_l y_l^{d-1} &0 &\cdots & 0 \\ y_1 &\cdots & y_l&y_{l+1}&\cdots&y_n \end{bmatrix}$$
has rank at most one. There are two cases.

\underline{Case 1:} One of the numbers $y_{l+1},\dots,y_n$ is nonzero. This forces $y_1=\cdots=y_l=0$ and any choice of $y_{l+1},\dots,y_n$ gives a solution. This means that any vector $x\in \text{span}\{v_1,\dots,v_l\}^\perp$ is an eigenvector of the original tensor $T$.

\underline{Case 2:} The other case is that $y_{l+1}=\cdots = y_n=0$. Then the above matrix having rank at most one is equivalent to the smaller matrix
$$\begin{bmatrix}\lambda_1 y_1^{d-1}&\cdots&\lambda_l y_l^{d-1}  \\ y_1 &\cdots & y_l \end{bmatrix}$$
having rank at most one. The ideal of the $2\times 2$ minors of this matrix is
$$I = \langle \lambda_i y_i^{d-1} y_j - \lambda_j y_j^{d-1} y_i : i< j\leq l\rangle.$$
By Lemma \ref{lemma:FermatHypersurface}, the radical of this ideal decomposes as
\begin{align*}
\sqrt I = \bigcap_{\mathcal I\subseteq [l], \eta} I_{\mathcal I, \eta}
\end{align*}
and each ideal $I_{\mathcal I, \eta}$ with $\mathcal  I = \{i_1,\dots ,i_k\}\subseteq [l]$ has the form
\begin{align}\label{yIdeals}
I_{\mathcal I, \eta}=\langle\lambda_{i_1}^{\frac1 {d-2}} y_{i_1} - \eta_1 \lambda_{i_k}^{\frac1{d-2}} y_{i_k}, \dots , \lambda_{i_{k-1}}^{\frac1{d-2}}y_{i_{k-1}} - \eta_{k-1}\lambda_{i_k}^{\frac1{d-2}}, y_{i_k}\rangle + \langle y_{i} : i\not\in\mathcal I\rangle,
\end{align}
where $\eta_1, \dots , \eta_{k-1}$ are $\left(d-2\right)$-nd roots of unity. By the Nullstellensatz, all elements in $\mathcal V(I)$ are the same as those in $\mathcal V(\sqrt I)$, which are in turn the elements in $\bigcup\mathcal V(I_{\mathcal I, \eta})$. Each ideal $I_{\mathcal I, \eta}$ gives exactly one solution in $\mathbb C\mathbb P^n$, representing one eigenvector $\left(y_1,\dots ,y_n\right)$ such that
\begin{align}
y_i = \begin{cases}
 \eta_{s} \frac1{\lambda_{i_s}}^{-\frac1{d-2}} & \text{ if } i = i_s \text{ and } s\leq k-1,\\\
\lambda_{i_k}^{-\frac1{d-2}}& \text{ if } i = i_k, \\
0 & \text{ if } i\in [n]\setminus\mathcal I.
\end{cases}
\label{yFormula}
\end{align}
Note that $y = \tilde Vx$ and $\tilde V$ is an orthogonal matrix. Therefore,
$$x =\tilde{V}^T y.$$
By Lemma \ref{lemma:FermatHypersurface}, we know that for each $k$ there are $\binom l k \left(d-2\right)^{k-1}$ eigenvectors with $k$ nonzero entries, which makes for a total of 
$$\sum_{k=1}^l\binom l k \left(d-2\right)^{k-1} =  \frac1{d-2}\left(\sum_{k=1}^l\binom n k \left(d-2\right)^{k}\right)$$
$$= \frac1{d-2}\left( \sum_{k=0}^l\binom n k \left(d-2\right)^{k}  -1 \right)=\frac{\left(d-1\right)^l - 1}{d-2}$$ eigenvectors of $T$ in this case.
\end{proof}
\section{The Odeco Variety}\label{sec:odeco}

The {\em odeco variety} is the Zariski closure in $S^d\left(\mathbb C^n\right)$ of the set of all tensors $T\in S^d\left(\mathbb R^n\right)$ which are orthogonally decomposable. If a tensor is odeco, then, in particular, its corresponding polynomial $f_T$ is decomposable as a sum of $n$ $d$-th powers of linear forms, i.e. it lies in the $n$-th secant variety of the $d$-th Veronese variety, denoted by $\sigma_n\left(v_d\left(\mathbb C^n\right)\right)$.

When $d = n = 3$, there is one equation defining $\sigma_3\left(v_3\left(\mathbb C^3\right)\right)$, called the Aronhold invariant \cite{L}, and it is given by the Pfaffian of a certain skew-symmetric matrix. The corresponding odeco variety in $S^3\left(\mathbb C^3\right)$ has codimension 4 and its prime ideal is generated by six quadrics, defined in Example~\ref{example:33}. For higher $d$ and $n$, the equations defining $\sigma_n\left(v_d\left(\mathbb C^n\right)\right)$ are much harder to compute. However, the odeco variety is smaller than $\sigma_n\left(v_d\left(\mathbb C^n\right)\right)$ and we believe that the defining equations of its prime ideal are quadrics that are easy to write down. They are shown in Conjecture \ref{conj:equations}.

\begin{lemma}\label{lem:dimension} The dimension of the odeco variety in $S^d\left(\mathbb C^n\right)$ is $\binom{n+1}2$.
\end{lemma}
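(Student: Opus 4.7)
The plan is to parameterize the odeco locus explicitly and then apply the fiber dimension theorem. Consider the polynomial map
$$\Phi: O(n) \times \mathbb{R}^n \longrightarrow S^d(\mathbb{R}^n), \qquad \Phi(V,\lambda) = \sum_{i=1}^n \lambda_i v_i^{\otimes d},$$
where $v_i$ denotes the $i$-th row of $V$. By definition, the image of $\Phi$ is precisely the set of real odeco tensors, so the odeco variety is the Zariski closure of $\mathrm{im}(\Phi)$ in $S^d(\mathbb{C}^n)$. The source has real dimension $\dim O(n) + n = \binom{n}{2} + n = \binom{n+1}{2}$, giving the upper bound $\dim(\mathrm{im}(\Phi)) \leq \binom{n+1}{2}$.

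For the matching lower bound, the plan is to show that the generic fiber of $\Phi$ is zero-dimensional. This follows from the remark after Theorem \ref{thm:robust}: for a generic $T$ in the image, the set $\{v_1,\dots,v_n\}$ is determined as the set of robust eigenvectors of $T$, and the $\lambda_i = T \cdot v_i^d$ are then determined too. Hence $\Phi^{-1}(T)$ is a finite union of signed permutations of $(V,\lambda)$, so by the fiber dimension theorem $\dim(\mathrm{im}(\Phi)) = \binom{n+1}{2}$.

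The main subtlety is passing from real to complex dimension. This can be handled by noting that $\Phi$ is polynomial and $O(n) \times \mathbb{R}^n$ is Zariski dense in its complexification $O(n,\mathbb{C}) \times \mathbb{C}^n$, so the Zariski closure of $\mathrm{im}(\Phi)$ in $S^d(\mathbb{C}^n)$ has complex dimension equal to the real dimension of $\mathrm{im}(\Phi)$. Alternatively, one can bypass this point entirely by computing $d\Phi$ directly at a generic $(V,\lambda)$: parameterizing skew variations by $\delta v_i = \sum_j b_{ij} v_j$ with $B^T = -B$, the image of $d\Phi$ is spanned by $v_i^{\otimes d}$ (from $\delta\lambda_i$) and by $\lambda_i\, v_i^{\otimes(d-1)} \otimes_{\mathrm{sym}} v_j - \lambda_j\, v_j^{\otimes(d-1)} \otimes_{\mathrm{sym}} v_i$ for $i < j$ (from $b_{ij}$). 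In the symmetric-power basis these tangent vectors are supported on disjoint multi-indices of the form $(i,\dots,i,j)$ and $(i,\dots,i)$, so for generic $\lambda$ they are linearly independent, yielding rank exactly $n + \binom{n}{2} = \binom{n+1}{2}$. Either route completes the proof.
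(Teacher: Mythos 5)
Your proof is correct and follows essentially the same route as the paper: the same parameterization $(\lambda, V) \mapsto \sum_i \lambda_i v_i^{\otimes d}$, with the finite-fiber argument coming from the uniqueness of the orthogonal decomposition via Theorem \ref{thm:robust}. Your additional care with the real-to-complex dimension passage and the explicit tangent-space computation are welcome refinements, but the underlying argument is the one the paper gives.
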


\begin{proof}
Consider the map
$$\phi: \mathbb R^n\times SO_n \to S^d\left(\mathbb R^n\right) \subset S^d\left(\mathbb C^n\right)$$
given by
$$\left(\lambda_1,\dots ,\lambda_n\right), V \mapsto \sum_{i=1}^n\lambda_i v_i^{\otimes d},$$
where $v_i$ is the $i$th row of the orthogonal matrix $V$. The image Im$\left(\phi\right)$ of this map is precisely the set of orthogonally decomposable tensors in $S^d\left(\mathbb R^n\right)$. The odeco variety is $\overline{\text{Im}\left(\phi\right)}\subset S^d\left(\mathbb C^n\right)$.
Note that by Theorem \ref{thm:robust}, $\phi$ has a finite fiber (up to permutations of the input). Then, $\dim($Im$\left(\phi ) \right) = \dim\left(\mathbb R^n\times SO_n\right) = n + \binom n 2 = \binom{n+1}2$. Therefore, the dimension of the odeco variety is $\dim \left(\overline{\text{Im}\left(\phi\right)}\right) = \binom{n+1}2$.
\end{proof}

We are going to conjecture what the defining equations of the odeco variety are. In Theorem \ref{prop:n=2} we prove the result for the case $n=2$.

Consider a tensor $T\in S^d\left(\mathbb C^n\right)$ and the corresponding homogeneous polynomial $f_T(x_1, x_2,$ $ \dots , x_n)\in\mathbb C[x_1, \dots , x_n]$ of degree $d$. To define our equations, it is more convenient to work with the polynomial version of the tensor. As mentioned before, given $T\in S^d\left(\mathbb C^n\right)$, the corresponding polynomial can be rewritten as
$$f_T\left(x_1,\dots ,x_n\right) = \sum_{j_1,\dots ,j_d} T_{j_1\dots j_d}x_{j_1}\dots x_{j_d}$$
$$= \sum_{i_1 +\cdots +i_n = d}\binom{d}{i_1, \dots , i_n}T_{\scriptsize{\underbrace{1\dots1}_{i_1\text{ times}}\dots\underbrace{n\dots n}_{i_n\text{ times}}}}x_1^{i_1}\dots x_n^{i_n} = \sum_{i_1+ \cdots + i_n=d} \frac 1{i_1!\dots i_n!} u_{i_1,\dots,i_n} x_1^{i_1}\dots x_n^{i_n},$$
where
$$u_{i_1,\dots ,i_n} = d! T_{\scriptsize{\underbrace{1\dots 1}_{i_1\text{ times}}\dots\underbrace{n\dots n}_{i_n\text{ times}}}}.$$
We write the equations defining the odeco variety in terms of the variables $u_{i_1,\dots ,i_n}$. Note that for all such variables $i_1+\cdots + i_n = d$.

\begin{conjecture}\label{conj:equations} The prime ideal of the odeco variety inside $S^d \left(\mathbb C^n\right)$ is generated by
\begin{align}\label{odecoEquations1}
\sum_{s=1}^n u_{y+e_s}u_{v+e_s} - u_{w+e_s}u_{z+e_s} = 0,
\end{align}
where $y,v,w,z\in\mathbb Z_{\geq 0}^n$ are such that $\sum_iy_i = \sum_i v_i = \sum_i z_i = \sum_i w_i = d-1$ and $y+v = z+w$.
\end{conjecture}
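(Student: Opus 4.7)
The plan is to split the conjecture into two containments: $J\subseteq I$, where $J$ is the ideal generated by the listed quadrics and $I$ is the prime ideal of the odeco variety, and $I\subseteq J$, which is the crux. The key insight driving the hard direction will be to recognize the quadrics as the coefficients of the matrix commutator $[H_T(x),H_T(y)]$, where $H_T(x) = \bigl(\partial_s\partial_t f_T(x)\bigr)_{s,t}$ is the Hessian matrix of the polynomial $f_T$.

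For $J\subseteq I$, I would compute that on an odeco tensor $T = \sum_k \lambda_k v_k^{\otimes d}$ with orthonormal $\{v_k\}$, the coefficient relation $u_\alpha = d!\sum_k \lambda_k v_k^\alpha$ combined with $\sum_s v_{k,s}v_{j,s} = \delta_{kj}$ gives
\[
\sum_s u_{\alpha+e_s}u_{\beta+e_s} \;=\; (d!)^2 \sum_k \lambda_k^2\, v_k^{\alpha+\beta},
\]
which depends only on $\alpha+\beta$; hence the quadrics vanish on the odeco variety.

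For the reverse containment, I would first observe that the $(s,t)$ entry of $H_T(x)$ is $\sum_{|\gamma|=d-2} u_{\gamma+e_s+e_t}\,x^\gamma/\gamma!$, and a direct expansion shows that the coefficient of $x^\gamma y^\delta/(\gamma!\delta!)$ in the $(p,q)$ entry of $[H_T(x),H_T(y)]$ is exactly a generator of $J$ with $y_{\mathrm{gen}} = \gamma+e_p$, $v_{\mathrm{gen}} = \delta+e_q$, $w_{\mathrm{gen}} = \delta+e_p$, $z_{\mathrm{gen}} = \gamma+e_q$. A chain argument, inductively reducing any pair of multi-indices $(y,v)$ to another $(w,z)$ with the same sum by single-index swaps, shows that $J$ is in fact generated by these ``elementary'' relations, so $T\in V(J)$ is equivalent to the matrix polynomials $H_T(x)$ and $H_T(y)$ commuting for all $x,y$. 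For a Zariski-generic $T\in V(J)$, pick $x_0$ so that $H_T(x_0)$ has $n$ distinct eigenvalues; being a complex symmetric matrix with simple spectrum, its eigenvectors are pairwise orthogonal under $u\cdot w = u^T w$ and generically non-isotropic, so they rescale to an orthonormal frame $v_1,\dots,v_n$ with complex orthogonal matrix $V$ having rows $v_k$. Commutation with $H_T(x_0)$ then forces $V H_T(x) V^T = D(x)$ to be diagonal for every $x$. In the rotated coordinates $y_k = v_k\cdot x$, the Hessian of $f_T$ is diagonal, so $\partial_{y_k}f_T$ depends only on $y_k$; combined with homogeneity, this forces $f_T(x) = \sum_k \lambda_k(v_k\cdot x)^d$, i.e., $T$ is odeco. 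Taking Zariski closures yields $V(J)\subseteq\,$odeco variety, so $\sqrt{J} = I$.

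The main obstacle will be upgrading this set-theoretic equality $\sqrt{J}=I$ to the ideal equality $J = I$ claimed by the conjecture, i.e., showing $J$ is already radical. The natural approach is a Jacobian / tangent-space argument: verify that at a smooth point of the odeco variety the differentials of the listed quadrics span a subspace of the cotangent space of dimension equal to the codimension $\binom{n+d-1}{d}-\binom{n+1}{2}$ of the odeco variety in $S^d(\C^n)$. I expect this step to be the main technical obstacle, because the generating set is large and highly redundant, and extracting a maximal-rank subset requires a careful combinatorial selection. A secondary subtlety is that over $\C$ the orthonormalization of eigenvectors can fail on a proper closed subvariety (isotropic eigenvectors), which must be absorbed into the Zariski-closure step rather than handled at every point.
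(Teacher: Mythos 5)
First, a point of framing: the statement you are proving is stated in the paper as a \emph{conjecture}, and the paper itself does not prove it. The paper establishes only (i) that the quadrics vanish on the odeco variety (Lemma \ref{lem:derivatives}), (ii) the case $n=2$, by exhibiting the quadrics as a Gr\"obner basis degenerating to the toric ideal of the rational normal curve (Theorem \ref{prop:n=2}), and (iii) that the odeco variety is a reduced irreducible component of $\mathcal V(I)$, via a Jacobian rank computation at the Fermat point $x_1^d+\cdots+x_n^d$ (the Lemma in Section \ref{sec:evidence}). Your first containment is the same computation as Lemma \ref{lem:derivatives}. Your identification of the quadrics with the entries of the commutator $[H_T(x),H_T(y)]$ is a genuinely different idea from anything in the paper, and it is a promising route to the \emph{set-theoretic} statement (it is essentially the mechanism in the later Boralevi--Draisma--Horobet--Robeva work the paper alludes to); the coefficient computation and the reduction of the generating set to the elementary commutator relations check out, modulo care with nonnegativity of the intermediate multi-indices.

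However, there are two genuine gaps. First, invoking ``a Zariski-generic $T\in V(J)$'' presupposes control over the components of $V(J)$, which is exactly what is at stake. Your argument shows that any component of $V(J)$ containing a point $T$ with some $H_T(x_0)$ of simple spectrum lies in the odeco variety; but over $\mathbb C$ a symmetric matrix need not be diagonalizable, and you have not excluded components on which every $H_T(x)$ has a repeated eigenvalue (e.g.\ the pencil is everywhere non-semisimple). Such a component would survive the Zariski closure and defeat $\sqrt J=I$. Second, and more seriously, the step you defer --- upgrading $\sqrt J=I$ to $J=I$ --- cannot be carried out by the Jacobian argument you propose. Checking that the differentials of the quadrics have rank $\binom{n+d-1}{d}-\binom{n+1}{2}$ at one smooth point of the odeco variety shows only that the odeco variety is a reduced irreducible component of the scheme cut out by $J$; this is precisely what the paper's own Lemma in Section \ref{sec:evidence} already proves, and it leaves open both extra components and embedded primes, so it does not yield radicality, let alone primality, of $J$. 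Without an additional structural input (a Gr\"obner degeneration as in the paper's $n=2$ proof, unmixedness or Cohen--Macaulayness of $J$, or an explicit primary decomposition), the ideal-theoretic statement remains unproved.
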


Written in terms of the $T$-variables, these equations can be expressed as
\begin{align}\label{Tequations}
\sum_{s=1}^nT_{i_1,\dots, i_{d-1}, s}T_{j_1,\dots,j_{d-1},s} - T_{k_1,\dots,k_{d-1},s}T_{l_1,\dots,l_{d-1},s} = 0,
\end{align}
for all indices such that $\{i_r, j_r\} = \{k_r, l_r\}$, and also up to permuting the indices due to the fact that $T$ is symmetric.

Another way to think about \eqref{Tequations} is as follows. Suppose we contract $T$ along one of its dimensions, say the $d$-th dimension resulting into a tensor $T*_dT\in S^2(S^{d-1}(\mathbb R^n))$ whose entry indexed by $i_1,\dots, i_{d-1}, j_1,\dots,j_{d-1}$ is
$$(T*_dT)_{i_1,\dots, i_{d-1}, j_1,\dots,j_{d-1}} = \sum_{s=1}^nT_{i_1,\dots, i_{d-1}, s}T_{j_1,\dots,j_{d-1},s}.$$
Then, the equations \eqref{Tequations} are equivalent to saying that $T*_dT$ also lies inside $S^{2(d-1)}(\mathbb R^n)$.

\begin{example} When $d=2$ the elements of $S^2\left(\mathbb R^n\right)$ are symmetric matrices and the set of equations (\ref{odecoEquations1}) is empty, which is equivalent to the fact that all symmetric matrices are odeco.
\end{example}

In essence, the ideal defined by (\ref{odecoEquations1}) is a lifting of the toric ideal defining the Veronese variety $v_{d-1}\left(\mathbb C^n\right)\subset S^{d-1}\left(\mathbb C^n\right)$ to non-toric equations on $S^d\left(\mathbb C^n\right)$.

\begin{example}\label{example:33} Let $d = n = 3$. We will illustrate how to obtain the equations (\ref{odecoEquations1}) of the odeco variety in $S^3\left(\mathbb C^3\right)$ from the equations of the Veronese variety $v_{d-1}\left(\mathbb C^n\right) = v_2\left(\mathbb C^3\right)$. 
Consider the Veronese embedding $v_2: \mathbb C^3 \to S^2\left(\mathbb C^3\right)$ given by $x \mapsto x^{\otimes 2}$. The image $v_2\left(\mathbb C^3\right)$ is the set of rank one $3\times 3$ symmetric matrices. The space $S^2\left(\mathbb C^3\right)$ has coordinates $u_{i_1i_2i_3}$, where $i_1+i_2 + i_3 = 2$. There are six equations that define the prime ideal of the Veronese variety $v_2\left(\mathbb C^3\right)\subseteq S^2\left(\mathbb C^3\right)$ and they are
\begin{align}\label{VeroneseEquations}
&u_{200}u_{020} - u_{110}^2~=~0, \hspace{1cm} u_{200}u_{011} - u_{110}u_{101}~=~0, \notag \\
&u_{200}u_{002} - u_{101}^2~=~0,  \hspace{1cm} u_{110}u_{002} - u_{101}u_{011}~=~0,\\ 
&u_{101}u_{020} - u_{110}u_{011}~=~0,  \hspace{1cm}u_{020}u_{002} - u_{011}^2~=~0.\notag
\end{align}
Each of these equations has the form $u_yu_v - u_wu_z = 0$, where $y,v,w,z\in\mathbb Z^3_{\geq0}$, $\sum_iy = \sum_iv=\sum_iw=\sum_iz = 2$, and $y+v = w+z$. Each such equation leads to one of the equations in (\ref{odecoEquations1}) as follows
$$u_yu_v - u_wu_z\mapsto u_{y+e_1}u_{v+e_1} - u_{w+e_1}u_{z+e_1} + u_{y+e_2}u_{v+e_2} - u_{w+e_2}u_{z+e_2} + u_{y+e_3}u_{v+e_3} - u_{w+e_3}u_{z+e_3}.$$
Therefore, using (\ref{VeroneseEquations}), we obtain the six equations in (\ref{odecoEquations1})
\begin{align*}
u_{200}u_{020} - u_{110}^2 \hspace{0.2cm}&\mapsto \hspace{0.2cm}u_{300}u_{120} - u_{210}^2 + u_{210}u_{030} - u_{120}^2 + u_{201}u_{021} - u_{111}^2,\\
 u_{200}u_{011} - u_{110}u_{101}\hspace{0.2cm}&\mapsto\hspace{0.2cm} u_{300}u_{111} - u_{210}u_{201} + u_{210}u_{021} - u_{120}u_{111} + u_{201}u_{012} - u_{111}u_{102},\\
u_{200}u_{002} - u_{101}^2 \hspace{0.2cm}&\mapsto\hspace{0.2cm} u_{300}u_{102} - u_{201}^2 + u_{210}u_{012} - u_{111}^2 + u_{201}u_{003} - u_{102}^2,\\
 u_{110}u_{002} - u_{101}u_{011}\hspace{0.2cm}&\mapsto\hspace{0.2cm} u_{210}u_{102} - u_{201}u_{111} + u_{120}u_{012} - u_{111}u_{021} + u_{111}u_{003} - u_{102}u_{012},\\
u_{101}u_{020} - u_{110}u_{011}\hspace{0.2cm}&\mapsto\hspace{0.2cm} u_{201}u_{120} - u_{210}u_{111} + u_{111}u_{030} - u_{120}u_{021} + u_{102}u_{021} - u_{111}u_{012},\\
u_{020}u_{002} - u_{011}^2\hspace{0.2cm}&\mapsto\hspace{0.2cm} u_{120}u_{102} - u_{111}^2 + u_{030}u_{012} - u_{021}^2 + u_{021}u_{003} - u_{012}^2.
\end{align*}
\end{example}

\begin{lemma}{\label{lem:derivatives}} The equations (\ref{odecoEquations1}) vanish on the odeco variety.
\end{lemma}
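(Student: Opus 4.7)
The plan is to work with the contraction interpretation already given right below the conjecture: the equations \eqref{Tequations} are equivalent to the statement that $T *_d T$, viewed a priori as an element of $S^2(S^{d-1}(\mathbb R^n))$, actually lies in the smaller space $S^{2(d-1)}(\mathbb R^n)$ (that is, the entry $(T*_dT)_{I,J}$ depends only on the concatenated multi-index $I+J$ of length $2(d-1)$, not on how it is split into halves). Translating back to the $u$-variables is routine, since $u_{i_1,\dots,i_n}$ is (up to a factorial) just the entry of $T$ indexed by the multiset with $i_j$ copies of $j$, and the hypothesis $y+v = w+z$ with $|y|=|v|=|w|=|z|=d-1$ encodes exactly the equality of the two $2(d-1)$-indices after summing a missing coordinate into each.

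First I would write $T = \sum_{i=1}^n \lambda_i v_i^{\otimes d}$ with $v_1,\dots,v_n$ orthonormal, and compute directly
\[
(T*_d T)_{I,J} \;=\; \sum_{s=1}^n T_{I,s}T_{J,s} \;=\; \sum_{i,j=1}^n \lambda_i\lambda_j\, v_{i,I}\,v_{j,J}\sum_{s=1}^n v_{i,s}v_{j,s} \;=\; \sum_{i,j=1}^n \lambda_i\lambda_j (v_i\cdot v_j)\, v_{i,I}\,v_{j,J},
\]
where $v_{i,I} := v_{i,i_1}\cdots v_{i,i_{d-1}}$ for $I=(i_1,\dots,i_{d-1})$. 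The orthonormality $v_i\cdot v_j = \delta_{ij}$ collapses the double sum, giving
\[
T*_d T \;=\; \sum_{i=1}^n \lambda_i^2\, v_i^{\otimes (2d-2)},
\]
which manifestly lies in $S^{2(d-1)}(\mathbb R^n)$. In particular, for any $I,J,K,L$ with $I+J=K+L$ as multisets (equivalently $y+v=w+z$ in the $u$-indexing), one has $(T*_dT)_{I,J} = (T*_dT)_{K,L}$, which is exactly \eqref{Tequations} and hence \eqref{odecoEquations1}.

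This shows the equations vanish on every orthogonally decomposable real tensor. Since the odeco variety is by definition the Zariski closure of this set inside $S^d(\mathbb C^n)$ and each equation in \eqref{odecoEquations1} is polynomial, it vanishes on the closure as well. There is essentially no obstacle here: the only nontrivial ingredient is the orthonormality-driven collapse $\sum_s v_{i,s}v_{j,s}=\delta_{ij}$, which is the one place where ``odeco'' is stronger than ``rank at most $n$'' and is precisely what these quadrics capture beyond the Veronese secant equations.
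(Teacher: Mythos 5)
Your proof is correct and rests on exactly the same computation as the paper's: writing $T=\sum_i\lambda_i v_i^{\otimes d}$, expanding the resulting double sum over $i,j$, and collapsing it via $\sum_s v_{is}v_{js}=v_i\cdot v_j=\delta_{ij}$. The only difference is packaging—you establish the closed form $T*_dT=\sum_i\lambda_i^2 v_i^{\otimes 2(d-1)}$ (the contraction viewpoint the paper itself notes after Conjecture~\ref{conj:equations}) and deduce the equations from symmetry of that tensor, whereas the paper verifies each quadric \eqref{odecoEquations1} directly in the $u$-coordinates, observing that the diagonal terms cancel because $y+v=w+z$ and the off-diagonal terms vanish by orthogonality.
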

%
%
%

\begin{proof}[Proof of Lemma \ref{lem:derivatives}]
Let $T = \sum_i \lambda_i v_i^{\otimes d}$ be odeco. Then, by definition of the $u$-variables, at the point $T$
$$u_{y_1\dots y_n} = d!\sum_{i=1}^n \lambda_i v_{i1}^{y_1}\cdots v_{i_n}^{y_n} = d! \sum_{i=1}^n\lambda_iv_i^y.$$
Thus, at the point $T$, the equations (\ref{odecoEquations1}), for $y,v,w,z\in\mathbb Z_{\geq 0}^n$ with $y+v = w+z$ and $\sum_i y = \sum_iv = \sum_i w = \sum_i z = d-1$, have the form
\begin{align*}
&\sum_({s=1}^n u_{y+e_s}u_{v+e_s} - u_{w+e_s}u_{z+e_s}=\\
&= (d!)^2\sum_{s=1}^n \big(\sum_{i=1}^n\lambda_i v_i^{y+e_s}\big) \big( \sum_{j=1}^n\lambda_j v_j^{v+e_s}\big) - \big (\sum_{i=1}^n\lambda_i v_i^{w + e_s}\big) \big( \sum_{j=1}^n \lambda_j v_j^{z + e_s}\big)\\
& = (d!)^2 \sum_{s=1}^n \big( \sum_{i=1}^n \lambda_i^2 (\cancel{v_i^{y+v+2e_s}} - \cancel{v_i^{w+z+2e_s}} )+ \sum_{i\neq j}\lambda_i \lambda_j (v_i^{y+e_s}v_j^{v+e_s} - v_i^{w+e_s}v_j^{z+e_s}))\big)\\
& = (d!)^2\sum_{i\neq j} \lambda_i\lambda_j ( v_i^yv_j^v - v_i^w v_j^z){\sum_{s=1}^nv_{is}v_{js}}= 0,
\end{align*}
where the last row is 0 since $v_i$ and $v_j$ are orthogonal and $\sum_{s=1}^nv_{is}v_{js} = v_i\cdot v_j = 0$

Therefore, (\ref{odecoEquations1}) vanish on the odeco variety.
\end{proof}

We are going to select a subset of the equations (\ref{odecoEquations1}) that spans the vector space defined by (\ref{odecoEquations1}). More precisely, consider
\begin{align}\label{odecoEquations2}
f_{y, v, i, j} = \sum_{s=1}^n u_{y+e_s}u_{v+e_s} - u_{y+e_i-e_j + e_s}u_{v-e_i+e_j + e_s},
\end{align}
for all $i\neq j\in\{1,2,\dots ,n\}$ and all $y, v\in \mathbb Z_{\geq 0}^n$ whose entries sum to $d-1$ and $y_j \geq1$, $v_{i}\geq 1$.

We now prove Conjecture \ref{conj:equations} for the case $n=2$.

\begin{theorem}\label{prop:n=2}
When $n=2$, the equations (\ref{odecoEquations2}) form a Gr\"obner basis with respect to the term order $\prec$ (defined below as a refinement of the weight order \eqref{weightedOrder}) and the dimension of the variety they cut out is $\binom {n+1}2 = 3$. The ideal defined by (\ref{odecoEquations2}) is the prime ideal of the Odeco variety. \end{theorem}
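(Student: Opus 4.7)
The plan is to prove Theorem~\ref{prop:n=2} by a Gr\"obner basis computation, coupled with a Hilbert-series comparison at the end to upgrade radicality to primality. Let $R=\C[u_0,\ldots,u_d]$ and let $I$ denote the ideal generated by the equations (\ref{odecoEquations2}) for $n=2$. First I would rewrite these equations in the single-index notation $u_k := u_{k,d-k}$, so that the generators take the compact form
\[
 f(a,b) \;=\; u_{a+1}u_{b+1} + u_a u_b - u_{a+2}u_b - u_{a+1}u_{b-1}
\]
for $a\in\{0,\ldots,d-2\}$ and $b\in\{1,\ldots,d-1\}$. The antisymmetry $f(a,b)=-f(b-1,a+1)$ together with the trivial case $f(a,a+1)=0$ reduces the essential generators to those with $a+2\leq b$. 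A useful reformulation is $f(a,b) = P_{a,b} + P_{a+1,b+1}$, where $P_{i,j} := u_iu_j - u_{i+1}u_{j-1}$ is a Hankel-type $2\times 2$ minor; the relation $f(a,b)=0$ thus says that the minors $P_{a+k,b+k}$ alternate in sign as $k$ varies.

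Second, with respect to the term order $\prec$ (the specified refinement of the weight order \eqref{weightedOrder}), I would identify the leading monomial of each $f(a,b)$; for a natural choice, this is $u_a u_b$. Verifying Buchberger's criterion then reduces to computing S-polynomials of pairs whose leading monomials share a variable and reducing each modulo the family, which can be carried out in closed form using explicit identities among the $P_{i,j}$'s. The resulting initial ideal is
\[
 \mathrm{in}_{\prec}(I) \;=\; \langle\, u_au_b \;:\; 0\leq a,\; a+2\leq b\leq d-1 \,\rangle,
\]
a squarefree monomial ideal, so $I$ is radical. I would then read off the Hilbert series of $R/I$ from the Stanley--Reisner complex of $\mathrm{in}_{\prec}(I)$: since $u_d$ never appears in a generator and is always free, the facets are the three-element sets $\{k,k+1,d\}$ for $k\in\{0,\ldots,d-2\}$, and a direct face count yields $H_{R/I}(t) = (1+(d-2)t)/(1-t)^3$; in particular, $\dim V(I) = 3$, matching Lemma~\ref{lem:dimension}.

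The main obstacle lies in the final step: upgrading ``radical of the correct dimension'' to ``prime'', since the initial ideal already has $d-1$ top-dimensional components that must all degenerate from the single irreducible odeco component. My plan is to compute the Hilbert series of the coordinate ring of the odeco variety independently via the polynomial parametrization
\[
 u_k \;\longmapsto\; d!\bigl(\lambda_1\alpha^k\beta^{d-k} + \lambda_2(-\beta)^k\alpha^{d-k}\bigr)
\]
modulo the relation $\alpha^2 + \beta^2 - 1$, which presents the target as a graded integral domain of Krull dimension three, and to verify that it also equals $(1+(d-2)t)/(1-t)^3$. Combined with the containment $I \subseteq P$ from Lemma~\ref{lem:derivatives}, where $P$ is the prime ideal of the odeco variety, the equality of the Hilbert series of $R/I$ and $R/P$ forces $I = P$, completing the proof.
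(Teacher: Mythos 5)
Your reformulation $f(a,b)=P_{a,b}+P_{a+1,b+1}$ is correct, and the overall architecture (Gr\"obner basis $\Rightarrow$ squarefree initial ideal $\Rightarrow$ radical of dimension $3$, then a separate argument for primality) parallels the paper's. But two things go wrong. First, under the order $\prec$ actually specified in the theorem --- the weight order \eqref{weightedOrder} refined by lex with $u_{d0}\succ\cdots\succ u_{0d}$ --- the leading monomial of $f(a,b)$ is the \emph{high}-weight term $u_{a+1}u_{b+1}$ (or $u_{a+2}u_b$ when $a\ge b$), not the low-weight term $u_au_b$; the initial ideal is $\langle u_iu_j: 1\le i,\ i+2\le j\le d\rangle$, with $u_0$ rather than $u_d$ as the free vertex. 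Since the ideal is invariant under $u_k\mapsto u_{d-k}$, the combinatorial conclusions (squarefreeness, the Hilbert series $(1+(d-2)t)/(1-t)^3$, dimension $3$) survive, but your S-polynomial analysis and facet description are keyed to the wrong monomials. Second, the Buchberger verification itself is only asserted (``can be carried out in closed form''), not performed; the paper handles it by a Macaulay2 check for $d\le 9$ together with the observation that any S-pair involves at most $9$ variables, which reduces the general case to those.

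The genuine gap is the primality step. You defer it to computing the Hilbert series of the subalgebra of $\C[\lambda_1,\lambda_2,\alpha,\beta]/(\alpha^2+\beta^2-1)$ generated by the forms $u_k$ and verifying that it equals $(1+(d-2)t)/(1-t)^3$, but you give no method for doing so; determining the graded dimensions of that subalgebra is essentially equivalent to determining all relations among the $u_k$, i.e., to the theorem itself. Concretely, you would need to produce $\binom{m+2}{2}+(d-2)\binom{m+1}{2}$ linearly independent degree-$m$ products of the $u_k$ for every $m$, and nothing in the proposal does this. The paper closes the gap differently and more cheaply: the high-weight part of $f(a,b)$ with respect to \eqref{weightedOrder} alone is exactly your $P_{a+1,b+1}=u_{a+1}u_{b+1}-u_{a+2}u_b$, so the weight-order initial ideal of $I$ is the ideal of $2\times 2$ minors of the Hankel matrix in $u_1,\dots,u_d$ --- the prime toric ideal of a rational normal curve --- and primality of a weight-order initial ideal forces primality of $I$; together with $\dim V(I)=3$ and the containment of the odeco variety, this yields $I=P$. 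Your own splitting of $f(a,b)$ into two Hankel minors puts this degeneration in plain view, so I would replace the Hilbert-series plan with it.
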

\begin{proof}
We are going to work over the polynomial ring
$$\mathbb C[\mathbf u] := \mathbb C[u_{i_1i_2} | i_1,i_2\geq 0\text{ and }i_1 +i_2 = d]$$
$$ = \mathbb C[u_{d0}, u_{\left(d-1\right) 1}, \dots , u_{0 d}].$$
Then, the equations (\ref{odecoEquations2}) are
$$f_{y,v, 1, 2} =  u_{y + e_1}u_{v+e_1} - u_{y+e_1 -e_2 + e_1}u_{v - e_1 + e_2+e_1} +  u_{y + e_2}u_{v+e_2} - u_{y+e_1 -e_2 + e_2}u_{v - e_1 + e_2+e_2},$$
where $y, v\in\mathbb Z_{\geq 0}^2$, the sum of the entries of each of $y$ and $v$ is $d-1$ and $y_2\geq 1, v_1\geq1$. Let the ideal they generate be
\begin{align}\label{idealn2}I := \langle f_{y,v,1,2} | y,v\in\mathbb Z_{\geq 0}^2, \sum_{i} y_i = \sum_{i} v_i = d-1, y_2 \geq 1, v_1\geq 1\rangle.\end{align}

We introduce the following weights on our variables. Let
\begin{align}\label{weightedOrder}\text{weight}\left(u_{i \left(d-i\right)}\right) = i,\end{align}
for all $i=0,1,\dots ,d$. 
Consider the weighted term order on monomials $\prec$ given by the above weights, refined by the lexicographic term order such that $u_{d0} \succ u_{\left(d-1\right)1}\succ \cdots \succ u_{0d}$ in case of equal weights.

\bigskip

\textit{We first show that the equations (\ref{odecoEquations2}) form a Gr\"obner basis with respect to $\prec$.} Using Macaulay2, we have shown that they form a Gr\"obner basis for $d=1,2,\dots ,9$. Now, consider any $d > 9$. Take $f_{y',v',1,2}$ and $f_{y'',v'',1,2}$. By Buchberger's second criterion, we only need to consider the two polynomials when their initial terms have a common variable. Then, the two polynomials $f_{y',v',1,2}$ and $f_{y'',v'',1,2}$ contain $l\leq 9$ different variables in total. If we restrict our generators (\ref{odecoEquations2}) to these $l$ variables only, the restriction of the term order is the same as the term order in the case $d=l-1$, and we have shown that in this case, the restricted generators form a Gr\"obner basis. Therefore, we can reduce the S-pair of $f_{y',v',1,2}$ and $f_{y'',v'',1,2}$ to $0$ using the generators (\ref{odecoEquations2}). Thus, the equations (\ref{odecoEquations2}) form a Gr\"obner basis.

\bigskip
\textit{Next, we show that the ideal $I$ generated by (\ref{odecoEquations2}) has dimension 3.} One way to see this is to use Lemma \ref{lem:dimension} together with the fact that $I$ is prime, which is proven below. Another way to see that $\dim I =3$ is to reason with standard monomials as follows.

Note that because of our choice of term order $\prec$, the initial term of every $f_{u,v,1,2}$ is square-free. The reason is that if $u_{y+e_s} = u_{v+e_s}$, then, weight$(u_{y+e_1}u_{v+e_1})$ $ = $  weight$($ $ u_{y+e_1-e_2+e_1}$ $u_{v-e_1+e_2-e_1}) > $ weight$($ $u_{y+e_2}u_{v+e_2}) $ $= $ weight$(u_{y+e_1-e_2+e_2}$ $u_{v-e_1+e_2-e_2})$, but $u_{y+e_1-e_2+e_1}$ appears first in $\prec$, so, $u_{y+e_1-e_2+e_1}u_{v-e_1+e_2-e_1}$ is the leading term. The reasoning is similar if $u_{y+e_1-e_2+e_2} = u_{v-e_1+e_2-e_1}$. Therefore, in$_{\prec}I$ (and thus $I$) is a radical ideal.

To show that $\dim I = 3$, let $S = \{u_{i_1 \left(d-i_1\right)}, u_{i_2 \left(d-i_2\right)}, u_{i_3 \left(d-i_3\right)}, u_{i_4 \left(d-i_4\right)}\}$ be a set of four variables, where $i_1 > i_2>i_3>i_4$. We will show that there is a monomial with only variables from $S$ which is not standard. This would mean that $\dim I\leq 3$. Indeed, consider
$$f_{\left(i_1-1,d-i_1+1\right),\left(i_3+1, d-i_3-1\right),1,2} = u_{\left(i_1-1\right)\left(d-i_1+1\right)}u_{\left(i_3+1\right)\left(d-i_3+1\right)} - \underline{u_{i_1\left(d-i_1\right)}u_{i_3\left(d-i_3\right)}} $$
$$+ u_{\left(i_1-2\right)\left(d-i_1+2\right)}u_{i_2\left(d-i_2\right)} - u_{\left(i_1-1\right)\left(d-i_1+1\right)}u_{\left(i_2-1\right)\left(d-i_2+1\right)}.$$
Since $i_1-2\geq i_3$, the initial term is $u_{i_1\left(d-i_1\right)}u_{i_3\left(d-i_3\right)}$. Therefore, $\dim I\leq 3$.

Now, consider the set $S = \{u_{2\left(d-2\right)}, u_{1\left(d-1\right)}, u_{0d}\}$. Suppose there exists
$$f_{y,v, 1, 2} =  u_{y + e_1}u_{v+e_1} - u_{y+e_1 -e_2 + e_1}u_{v - e_1 + e_2+e_1} +  u_{y + e_2}u_{v+e_2} - u_{y+e_1 -e_2 + e_2}u_{v - e_1 + e_2+e_2},$$
such that in$_\prec \left(f\right)$ has both of its variables in $S$. We know that in$_{\prec}\left(f\right) = u_{y + e_1}u_{v+e_1}$ or in$_{\prec}\left(f\right) = u_{y+e_1 -e_2 + e_1}u_{v - e_1 + e_2+e_1}$. Moreover, if $y = \left(y_1,y_2\right)$ and $v = \left(v_1,v_2\right)$, then, $y_2,v_1 \geq 1$ and $y_1,v_2\leq d-2$. Thus, if in$_{\prec}\left(f\right) = u_{y + e_1}u_{v+e_1}$ and $u_{y + e_1},  u_{v+e_1}\in S$, then, $ v = \left(1,d-2\right)$ and $y = \left(1,d-2\right)$ or $y=\left(0,d-1\right)$. Since $f_{y,v,1,2}$ is not the trivial polynomial $0$, then, $y\neq\left(0,d-1\right)$. Thus, $y=\left(1,d-2\right)$. But this is impossible since in$_\prec \left(f\right)$ is square-free for every generator $f$. If in$_\prec \left(f\right) =  u_{y+e_1 -e_2 + e_1}u_{v - e_1 + e_2+e_1}$ and $u_{y+e_1 -e_2 + e_1},u_{v - e_1 + e_2+e_1}\in S$, then, $u_{\left(y_1+2,y_1-1\right)}\in S$. But $y_1\geq 1$, so, $y_1+2\geq 3$, therefore, $u_{\left(y_1+2,y_2-1\right)}\not\in S$. In any case, there can't be a monomial with only variables in $S$, which is a leading term of an element in $I$. Thus, $\dim I = 3$.

Another way to see that $\dim I\geq 3$ is by noting that $V\left(I\right)$ contains the odeco variety, which has dimension $3$ in this case.

\bigskip
\textit{Finally, we show that the ideal generated by (\ref{odecoEquations2}) is prime.} Let $J$ be the ideal generated by the leading binomials of the elements in (\ref{odecoEquations2}) with respect to the weight order defined by \eqref{weightedOrder} (without considering the refinement given by the order of the variables). Denote by $g_w$ the leading term of a polynomial $g$ just with respect to this weight order. Then, $\left(f_{y,v,1,2}\right)_w = u_{y + e_1}u_{v+e_1} - u_{y+e_1 -e_2 + e_1}u_{v - e_1 + e_2+e_1}$, and $J=\langle u_{y + e_1}u_{v+e_1} - u_{y+e_1 -e_2 + e_1}u_{v - e_1 + e_2+e_1}: y,v\in\mathbb Z_{\geq 0}^2, y_1+y_2 = v_1+v_2=d-1, y_2,v_1\geq1\rangle$. The ideal $J$ is the prime ideal of the rational normal curve; in particular, it is prime. Moreover, by Proposition 1.13 in \cite{S}, in$_\prec\left( I\right) = $in$_\prec\left(J\right)$.  Therefore, in$_\prec\left(I\right)$ is an initial ideal of both $I$ and $J$. In the following paragraph, we show that $J$ is the initial ideal of $I$ with respect to the weight order given by \eqref{weightedOrder}. Then, since $J$ is prime, it follows that $I$ is prime.

Suppose $J$ is not initial, i.e. there exists $g\in I$ such that $g_w\not\in J$. Choose $g$ with in$_\prec\left(g\right)$ as small as possible. Since the elements $f_{u,v,1,2}$ form a Gr\"obner basis of $I$, then, there exist $y,v$ such that in$_\prec\left(g\right)$ is divisible by in$_\prec \left(f_{y,v,1,2}\right)$. Then, $g=\alpha_{y,v} f_{y,v,1,2} + g_1$, where $\alpha_{y,v}$ is a monomial and in$_\prec g_1\prec$ in$_\prec g$. But note that then, $g_w = \alpha_{y,v}\left(u_{y + e_1}u_{v+e_1} - u_{y+e_1 -e_2 + e_1}u_{v - e_1 + e_2+e_1}\right) + \left(g_1\right)_w$. Since $u_{y + e_1}u_{v+e_1} - u_{y+e_1 -e_2 + e_1}u_{v - e_1 + e_2+e_1}\in J$ and $g_w\not\in J$, then, $\left(g_1\right)_w\not\in J$. But this is a contradiction since in$_\prec\left(g_1\right)\prec$ in$_\prec\left(g\right)$ and we chose in$_\prec\left(g\right)$ to be as small as possible such that $g_w\not \in J$.

Therefore, $J$ is initial. Since it is prime, then, $I$ is also prime. By Lemma \ref{lem:dimension}, the dimension of the odeco variety for $n=2$ is $3$. Moreover, it is contained in $\mathcal V\left(I\right)$. Since $\mathcal V\left(I\right)$ is also irreducible and has dimension $3$, then, $I$ is exactly the prime ideal of the Odeco variety.
\end{proof}

\subsection{Evidence for Conjecture \ref{conj:equations}}\label{sec:evidence}

\begin{lemma}\label{lem:dimension} The odeco variety is an irreducible component of $\mathcal V\left(I\right)$, where $I$ is the ideal generated by the equations (\ref{odecoEquations1}). \end{lemma}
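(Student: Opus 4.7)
The odeco variety is irreducible (as the Zariski closure of the image of the polynomial map $\phi$ from an irreducible domain), has dimension $\binom{n+1}{2}$ by the earlier dimension lemma, and is contained in $\mathcal{V}(I)$ by Lemma \ref{lem:derivatives}. The plan is to exhibit a single smooth point $T_0$ of the odeco variety at which the tangent space to $\mathcal{V}(I)$ has dimension exactly $\binom{n+1}{2}$; then the inclusion $T_{T_0}(\text{odeco}) \subseteq T_{T_0}\mathcal{V}(I)$ combined with matching dimensions forces the odeco variety to be an irreducible component of $\mathcal{V}(I)$ through $T_0$. The convenient test point is the identity tensor $T_0 = \sum_{i=1}^n e_i^{\otimes d}$, whose $u$-coordinates are $u_{de_i} = d!$ and $u_\alpha = 0$ for every other multi-index $\alpha$.

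I would first verify smoothness of the odeco variety at $T_0$ by differentiating the parametrization $\phi$ at $((1,\ldots,1), I_n)$. Varying the $\lambda_i$ contributes the $n$ tangent directions $e_i^{\otimes d}$, and varying $V$ in a skew-symmetric direction $A \in \mathfrak{so}_n$ contributes the $\binom{n}{2}$ directions $d\sum_{i<j} A_{ji}\bigl[\mathrm{Sym}(e_j \otimes e_i^{\otimes(d-1)}) - \mathrm{Sym}(e_i \otimes e_j^{\otimes(d-1)})\bigr]$, corresponding in the polynomial picture to $x_i^d$ and $x_i^{d-1}x_j - x_i x_j^{d-1}$. These are manifestly linearly independent, so $T_{T_0}(\text{odeco})$ has dimension $n + \binom{n}{2} = \binom{n+1}{2}$ and $T_0$ is smooth on the odeco variety.

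The main step is to bound $\dim T_{T_0}\mathcal{V}(I) \leq \binom{n+1}{2}$. The directional derivative of a generator $f_{y,v,w,z}$ at $T_0$ is a linear combination of terms $u_\beta|_{T_0}\,\dot u_\gamma$, and the sparsity $u_\beta|_{T_0} \neq 0 \iff \beta = de_i$ forces at least one of $y,v,w,z$ to equal $(d-1)e_i$ for some $i$. I would enumerate cases by how many of $y, v, w, z$ lie on a coordinate axis $\{(d-1)e_i : i \in [n]\}$. When all four do, the constraint $y+v=w+z$ forces $f_{y,v,w,z}$ to be identically zero. The three-on-axis case is combinatorially impossible (the fourth is then forced onto an axis by $y+v=w+z$). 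The two-on-axis case reduces, after ruling out configurations that are automatically promoted to the three- or four-on-axis situation, to $(y,v)$ (or symmetrically $(w,z)$) both on axes with distinct supports, yielding the relation $\dot u_{(d-1)e_\alpha + e_\beta} + \dot u_{(d-1)e_\beta + e_\alpha} = 0$ for each $\alpha \neq \beta$. The one-on-axis case, say $v = (d-1)e_\alpha$, collapses to $\dot u_{y+e_\alpha} = 0$. The key combinatorial claim is that for every multi-index $\mu$ with $|\mu|=d$ outside the exceptional set $\{de_i\} \cup \{(d-1)e_i + e_j : i \neq j\}$, one can pick $\alpha$ with $\mu_\alpha > 0$ so that $y = \mu - e_\alpha$ is off-axis, and then exhibit off-axis $w, z$ with $w+z = y + (d-1)e_\alpha$. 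This forces $\dot u_\mu = 0$ for every such $\mu$, leaving only the $n$ free values $\dot u_{de_i}$ and the $\binom{n}{2}$ antisymmetric pairs $\dot u_{(d-1)e_i + e_j} = -\dot u_{(d-1)e_j + e_i}$, totaling $\binom{n+1}{2}$.

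The main technical obstacle is this realization step: when $\mu$ is supported on at least three distinct indices, splitting $\mu + (d-2)e_\alpha$ into two off-axis pieces is straightforward (put one extra $e_{\alpha_2}$ on one piece and the rest on the other); when $\mu$ is supported on exactly two indices with both entries at least $2$ (forcing $d \geq 4$), one must be slightly more careful and find an explicit off-axis splitting, which I would verify by a short direct computation. Once this is done, the tangent-space equality $T_{T_0}(\text{odeco}) = T_{T_0}\mathcal{V}(I)$ implies that the odeco variety is an irreducible component of $\mathcal V(I)$, as desired.
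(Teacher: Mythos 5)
Your proposal is correct and follows essentially the same route as the paper: both evaluate the Jacobian of the generators at the Fermat point $T_0=\sum_i e_i^{\otimes d}$, exploit that the only nonvanishing $u$-coordinates there are the $u_{de_i}$, and conclude that the corank is $\binom{n+1}{2}$, matching the dimension of the (irreducible) odeco variety contained in $\mathcal V(I)$. The paper organizes the computation as a rank count over a specific ``sorted'' generating set while you compute the kernel by a case analysis on how many of $y,v,w,z$ lie on a coordinate axis, but the content — including the antisymmetry relations $\dot u_{(d-1)e_i+e_j}+\dot u_{(d-1)e_j+e_i}=0$ and the vanishing of all remaining $\dot u_\mu$ — is the same.
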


\begin{proof}
We show that the dimension of the component of $\mathcal V\left(I\right)$ containing the odeco variety is equal to $\binom {n+1}2$. This equals the dimension of the odeco variety. Since it is irreducible, then it is an irreducible component of $\mathcal V\left(I\right)$.

Consider the point $T\in\mathcal V\left(I\right)$ given by $T_{i\dots i} = 1$ for all $i=1,\dots ,n$ and all other entries of $T$ are $0$. The polynomial corresponding to $T$ is the standard Fermat polynomial $f_T\left(x_1,\dots ,x_n\right) = x_1^d + \cdots + x_n^d$. In the $u$ coordinates, $T$ is represented by the point for which $u_{0\dots 0 d 0\dots 0} = u_{d e_i} = 1$ for $i=1,\dots ,n$ and all other $u_{i_1\dots i_n} = 0$. 

We can select generators $f_{v, w}$ for $I$ such that $v,w\in\mathbb Z_{\geq 0}^n$ with $\sum_iv_i = \sum_i w_i = d-1$ and

$$f_{v, w} = \sum_{i=1}^s u_{v+e_s}u_{w+e_s} - u_{\text{sort}\left(v,w\right)_1 + e_s}u_{\text{sort}\left(v, w\right)_2 + e_s},$$
where sort$\left(v,w\right)_1$ and sort$\left(v,w\right)_2$ are defined as follows. Given $v$ and $w$, form the corresponding sequences $t\left(v\right) = \underbrace{1\dots 1}_{v_1\text{ times}}\underbrace{2\dots 2}_{v_2\text{ times}}\dots \underbrace{n\dots n}_{v_n\text{ times}}$ and $t\left(w\right)=\underbrace{1\dots 1}_{w_1\text{ times}}\underbrace{2\dots 2}_{w_2\text{ times}}\dots \underbrace{n\dots n}_{w_n\text{ times}}$. Let $t\left(v,w\right) = \text{sort}\left(t\left(v\right)\cup t\left(w\right)\right)$ be the sequence obtained by concatenating $t\left(v\right)$ and $t\left(w\right)$ and then sorting. Let $t\left(v,w\right)_{1}$ be the subsequence of elements in odd positions and $t\left(v,w\right)_{2}$ the subsequence of elements in even positions. Define $u_{\text{sort(v,w)}_1}$ and $u_{\text{sort}\left(v,w\right)_2}$ be the corresponding $u$ variables. The fact that the polynomials $f_{u,w}$ generate $I$ follows from Theorem 14.2 in \cite{S}.

We form the Jacobian $\mathcal J$ of $I$ at the point $T$. Index the rows of $\mathcal J$ by the generators $f_{v,w}$ and index the columns by the variables $u_{i_1,\dots ,i_n}$. Note that $\frac{\partial f}{\partial u_{d e_i}}|_T = 0$ since the monomials in $f_{v,w}$ containing $u_{d e_i}$ contain another variable $u_{i_1,\dots ,i_n}\neq u_{d e_j}$ for all $j=1,\dots ,n$. Therefore, the column corresponding to $u_{d e_i}$ is zero.

Note that the monomials $u_{\text{sort}\left(v,w\right)_1 + e_s}u_{\text{sort}\left(v, w\right)_2 + e_s}$ cannot contain a variable $u_{d e_i}$ for any $v$ and $w$ that give a nontrivial $f_{u,v}$, so they  don't matter in the Jacobian analysis.

Now, the column of $\mathcal J$ corresponding to the variable $u_{\left(d-1\right)e_i + e_j}$ for $i\neq j$ has $1$ only in the rows corresponding to $f_{\left(d-1\right)e_i, \left(d-1\right)e_j}$ and so does the variable $u_{\left(d-1\right)e_j + e_i}$. Therefore, the variables $u_{\left(d-1\right)e_i + e_j}$ and the polynomials $f_{\left(d-1\right)e_i, \left(d-1\right)e_j}$ form a block in $\mathcal J$ of rank $\binom n2$, which equals the number of pairs $i\neq j$.

For any other variable $u_{i_1,\dots ,i_n}$, such that $\left(i_1,\dots ,i_n\right)\neq d e_i$ or $\left(d-1\right)e_i + e_j$, its corresponding column is nonzero only at the rows corresponding to the polynomials $f_{\left(i_1,\dots ,i_n\right)-e_s, \left(d-1\right)e_s}$ for all $s$ such that $i_s > 0$. Each such polynomial has no other $1$'s in its row except for the one at $u_{i_1,\dots ,i_n}$. Therefore, each variable $u_{i_1,\dots ,i_n}$, such that $\left(i_1,\dots ,i_n\right)\neq d e_i$ or $\left(d-1\right)e_i + e_j$, contributes a size $1\times \{\# s: i_s > 0\}$ nonzero block to $\mathcal J$, so it contributes 1 to the rank. Therefore, the rank of $\mathcal J$ is
$$\# \text{ variables } - \#\{u_{de_i}\} - \#\{u_{\left(d-1\right)e_i + e_j : i\neq j}\} + \binom n2$$
$$ = \#\text{ variables } - n - n\left(n-1\right) + \binom n2 = \#\text{ variables } - \binom{n+1}2.$$
Thus, the rank of the Jacobian at a smooth point in the irreducible component of $T$ is at least $ \#\text{ variables } - \binom{n+1}2$, so  the dimension of an irreducible component containing $T$ is at most $\binom{n+1}2$.

Since the odeco variety is irreducible, has dimension $\binom{n+1}2$, contains $T$, and is contained in $\mathcal V\left(I\right)$, then it is one of the irreducible components of $\mathcal V\left(I\right)$.
\end{proof}
 Lemma \ref{lem:dimension} shows that one only needs to show that the ideal $I$ is prime in order to confirm Conjecture \ref{conj:equations}.

\subsection*{Computations}
In Figure~\ref{Fig2} we show some computational checks of the conjecture. 

Since the ideal $I$ becomes quite large, as $n$ and $d$ grow, it soon becomes hard to check its primality. It was easy to check the conjecture was correct in the case $n=d=3$ using Macaulay2. The case $n=3,d=4$ was checked using the numerical homotopy software Bertini. We were unable to confirm the rest of the results using (short) computations.

\begin{figure}[H]
\begin{center}
\begin{tabular} {|c|c|c|c|c|c|}
\hline
$n$ & $d$ & dimension & degree & \# min. gens. & conjecture check \\ \hline
3 & 3 & 6 & 10& 6& True\\ \hline
3 & 4 & 6&35&27& True\\ \hline
3 & 5 & 6&84&75& \\ \hline
4 & 3 & $\geq 10$ &&20& \\ \hline
4 & 4 & $\geq 10$ &&126& \\ \hline
5 & 3 & $\geq 15$ &&50&\\ \hline
\end{tabular}
\caption{A table of what can be found computationally about the ideal $I$ generated by the equations in \eqref{odecoEquations1}.}\label{Fig2}
\end{center}
\end{figure}

In upcoming work with Jan Draisma, Emil Horobet and Ada Boralevi, we show that a real symmetric tensor satisfies the proposed equations if and only if it is odeco. A complete proof of Conjecture~\ref{conj:equations} is still in progress.

\section*{Acknowledgements}
I would like to thank my advisor Bernd Sturmfels for his great help in this project. I would also like to thank Kaie Kubjas and Luke Oeding for helpful comments and Matthew Niemerg for his help with the software Bertini. The author was supported by a UC Berkeley Graduate Fellowship and by the National Institute of Mathematical Sciences (NIMS) in Daejeon, Korea.


\newcommand{\Addresses}{{
  \bigskip
  \footnotesize

 Author's address: \textsc{755 Evans Hall, Department of Mathematics,
    University of California, Berkeley, Berkeley, CA94720}\par\nopagebreak
  \textit{E-mail} \texttt{erobeva@berkeley.edu}
}}

\Addresses

\end{document}